  \newcommand{\nm}{\noalign{\smallskip}}
\newtheorem{definition}{Definition}[section]
\newtheorem{theorem}{Theorem}
\newtheorem{proposition}{Proposition}[section]
\newtheorem{lemma}[proposition]{Lemma}
\newtheorem{remark}{Remark}[section]
\numberwithin{figure}{section}
\DeclareMathOperator{\Span}{Span}
\newcommand{\cC}{{\cal C}}
\newcommand{\cD}{{\cal D}}
\newcommand{\cH}{{\cal H}}
\newcommand{\cP}{{\cal P}}
\newcommand{\T}{{\mathcal{T}}}
\newcommand{\sym}{{\text{sym}}}
\newcommand{\pw}{{\text{pw}}}
\newcommand{\td}{{\text d}}
\newcommand{\e}{{\varepsilon}}
\newcommand{\g}[1]{\mathbf{#1}}
\newcommand{\loc}{\text{loc}}
\newcommand{\norm}[2]{\left\|{#1}\right\|_{#2}}
\newcommand{\A}{\mathbb{A}}
\newcommand{\F}{\mathbb{F}}
\newcommand{\M}{\mathbb{M}}
\newcommand{\R}{\mathbb{R}}
\newcommand{\1}{{\bf1}}
\newcommand{\lra}{{\longrightarrow}}
\newcommand{\tin}{\text{in} \ }
\author{Habib Ammari \thanks{\footnotesize Department of Mathematics, 
ETH Z\"urich, 
R\"amistrasse 101, CH-8092 Z\"urich, Switzerland (habib.ammari@math.ethz.ch).}\and  Elie Bretin\thanks{\footnotesize Institut Camille Jordan, INSA de Lyon \& UCBL, Lyon, F-69003, France (elie.bretin@insa-lyon.fr). } \and  Pierre Millien\thanks{\footnotesize  Institut Langevin,  1 Rue Jussieu, 75005 Paris, France (pierre.millien@espci.fr).}\and Laurent Seppecher \thanks{\footnotesize  Institut Camille Jordan, Ecole Centrale de Lyon \& UCBL, Lyon, F-69003, France (laurent.seppecher@ec-lyon.fr). }}
\title{A direct linear inversion for discontinuous elastic parameters recovery from internal displacement information only}
\begin{document}

\date{}

\maketitle 
\begin{abstract}
The aim of this paper is to present and analyze a new direct method for solving the linear elasticity inverse problem. Given measurements of some displacement fields inside a medium, we show that a stable reconstruction of elastic parameters is possible, even for discontinuous parameters and without boundary information. We provide a general approach based on the weak definition of the stiffness-to-force operator which conduces to see the problem as a linear system. We prove that in the case of shear modulus reconstruction, we have an $L^2$-stability with only one measurement under minimal smoothness assumptions. This stability result is obtained though the proof that the linear operator to invert has closed range. We then describe a direct discretization which provides stable reconstructions of both isotropic and anisotropic stiffness tensors.

\end{abstract}

\def\keywords2{\vspace{.5em}{\textbf{  Mathematics Subject Classification
(MSC2000).}~\,\relax}}
\def\endkeywords2{\par}
\keywords2{35R30, 35B35, 65N21.}

\def\keywords{\vspace{.5em}{\textbf{ Keywords.}~\,\relax}}
\def\endkeywords{\par}
\keywords{Elastography, Inverse Problem, Shear Modulus Imaging.}
\tableofcontents

\section{Introduction}

Elastography is an imaging modality that aims at reconstructing the mechanical properties of tissues. The local values of  the elastic parameters can be used as a discriminatory criterion to differentiate healthy tissues from diseased tissues \cite{sarvazyan1995biophysical}.  Elasticity imaging emerged in the late $80$'s and early $90$'s as a way to improve the diagnostics on ultrasound images \cite{lerner1988sono}. 
A variety of techniques have been developed since then to assess the elastic parameters of tissues in vivo. For a comprehensive list of the different seminal works on the subject, we refer the reader to the reviews \cite{gennisson2013ultrasound,parker2010imaging,doyley2012model,wang2015optical}.

Most of the elastography methods are based on the following four steps:
\begin{enumerate}
\item[(i)] Perturb a medium with a mechanical stimulation (static, harmonic, or transient);
\item[(ii)] Image the deformation of the medium (usually via ultrasound imaging, magnetic resonance imaging, or optical coherence tomography);
\item[(iii)] Reconstruct the displacement field or some of its components in the medium;
\item[(iv)] Reconstruct the mechanical properties of the medium by solving an inverse problem.
\end{enumerate}

In most cases, the scale of the imaging resolution and the amplitude of the displacement field justify the use of linear elasticity model:

\begin{equation}\nonumber
-\nabla\cdot (\g C:\nabla^s\g u) = \g f,
\end{equation}
where $\g C = \{\g C_{ijkl}\}_{1\leq i,j,k,l \leq d}$ is the order four unknown elasticity tensor in dimension $d\in\{2,3\}$, $\nabla^s\g u:=(\nabla\g u+\nabla\g u^T)/2$ is the strain tensor associated to the displacement field $\g u$. The internal force density $\g f$ depends on the type of  source excitation: In the elastostatic regime, $\g f$ is zero, in elastodynamics, $\g f=\partial_{tt}\g u$ or $\g f=-\omega^2\g u$ in the time harmonic regime.

\subsection{Scientific context}

We consider the problem of reconstructing the elasticity tensor $\g C$
from the knowledge of a finite number $n$ of displacement fields $\{{\g u^{\ell} }\}_{\ell =1}^n$ solutions of the system of linear elasticity 
$$ - \nabla \cdot(\g C : \nabla^s \g u^{\ell}) =\g f^{\ell},$$
where the force densities $\g f^{\ell}$ are assumed to be known. In the isotropic elastic cases, the tensor $\g C$ can be written as 
$$\g C =  2 \mu {\g I}+ \lambda I \otimes I,$$
where $\mu$ and $\lambda$ are the Lam\'e coefficients, $\g I$ is the identity tensor $\g I_{ijkl}=\delta_{ik}\delta_{jl}$ and $I$ is the identity matrix $I_{ij}=\delta_{ij}$. Note that some results about the stability of this inverse problem can be found in \cite{ammari2015stability, widlak2015stability,bal2015reconstruction}. 

Before reviewing the different inversion methods already developed for the fourth step, it is important to have in mind the methods available for the reconstruction of the displacement field (third step). Displacement field reconstructions methods fall into two categories:
\begin{itemize}
\item[(i)] Methods that, given images of the unperturbed and the perturbed medium, use a mathematical treatment to recover the geometrical transformation between the images. Such methods can be based, for exemple, on speckle correlation technique \cite{thielicke2014pivlab}, optimal control \cite{ammaribretin2015mathematical} or optimal transport \cite{haker2004optimal}.
\item[(ii)] Direct reconstruction of the displacement field (or one of its components) during the imaging procedure.  Since ultrasound and OCT are imaging modalities that rely on the computation of a \emph{travel time} in a single scattering regime, axial displacements that are one or two orders of magnitude below the resolution of the imaging modality can be directly reconstructed by measuring a phase shift of the backscattered echo, with a very high frame rate ($\sim 10$ KHz for ultrasound  \cite{sandrin2002shear}, $\sim 700$ Hz for OCT \cite{wang2006tissue,nahas2013supersonic}). Although generally only the axial displacement is recovered by this method, a smart illumination sequence allows for a recovery of the axial and the lateral displacements \cite{tanter2002ultrafast,bal2015displacement}. Step $2$ and $3$ are therefore performed simultaneously.
\end{itemize}

A variety of methods are already available to perform the recovery problem (fourth step), depending on the type of mechanical stimulation, 
the data available (full internal displacement field or partial displacement field, single or multiple measurements), 
or the used  model (linear compressible elasticity or incompressible Navier equation). Most inversion algorithms roughly fall into one of these categories:
\begin{itemize}
\item[(i)] Resolution of a first-order transport equation \cite{ji2003recovery,mclaughlin2003unique,mclaughlin2010calculating,bal2014reconstruction};
\item[(ii)] Algebraic inversions \cite{barbone2007elastic,sandrin2002shear,bercoff2004supersonic,bal2015reconstruction};
\item[(iii)] Iterative inversions \cite{ammari2015mathematical,ammari2008method,ammari2015viscoelastic}.
\end{itemize}

First order methods and algebraic inversions are stable under some regularity assumptions on the elastic parameters of
the medium and the reconstructed displacement field, but their performances decrease when the Lam\'e parameters 
are not locally differentiable, which is often the case in biological media, or when the reconstruction of the displacement field is noisy. Moreover, they assume boundary knowledge which is usually not available in biomedical applications.

Iterative inversions assume less regularity for the elastic parameters,
but are computationally more costly, since a forward problem needs to be simulated at each step. 
In practice, it is difficult to use this approach  because some boundary information is required.

In clinical applications, the current state of the art for ultrasound and OCT based shear modulus imaging 
\cite{sandrin2003transient,bercoff2003vivo,chauvet2016vivo,nahas2013supersonic} is the algebraic inversion method developed 
in \cite{bercoff2004supersonic,montaldo2009coherent}. It relies on the assumption that the medium is \emph{locally homogeneous} 
(but not necessarily isotropic \cite{gennisson2003transient}) and is based on the computation of the group speed of a shear wave. In this \emph{locally homogeneous} case, the different polarizations of the elastic waves are decoupled, and only one component of the displacement field is required to compute the shear modulus. More precisely, the shear wave equation is

\begin{equation}\nonumber
-\nabla\cdot(\mu\nabla^s\g u) = -\partial_{tt}\g u
\end{equation}
and if $\mu$ is constant almost everywhere, one can assume that 

\begin{equation}\label{eq:approx}
-\mu\nabla\cdot(\nabla^s\g u) \approx -\partial_{tt}\g u\quad\text{almost everywhere}.
\end{equation}
This approximation is  in general false but it allows to simply approach $\mu$ as the square of the group speed of the shear wave. One just has to observe shear waves displacement using a fast enough imaging method. Another possible technique is to directly use the approximation 

\begin{equation}\label{eq:inverse}
\mu\approx \frac{|\partial_{tt}\g u|}{|\nabla\cdot(\nabla^s\g u)|}
\end{equation}
at positions and times such that $\nabla\cdot(\nabla^s\g u)$ does not vanish. These methods have the advantages of being able to reconstruct a good image of the shear modulus from small sub-wavelength displacement fields, at a very low computational cost (no matrix inversion needed).
Nevertheless, the method fails to quantitatively reconstruct the shear modulus where the medium exhibits discontinuities or strong variations. It is also not applicable to elastostatic experiments as the term $\partial_{tt}\g u$ must not vanish.

The method that we propose in this article is directly inspired by the previous formulae. If one defines the linear operator $A_{\g u}:\mu\mapsto -\nabla\cdot(\mu\nabla^s\g u)$, the approximation made in \eqref{eq:approx} is in fact a diagonal approximation of $A_{\g u}$ defining the diagonal operator $D_{\g u}=\mu\mapsto -\mu\nabla\cdot(\nabla^s\g u)$ and the inverse formula \eqref{eq:inverse} is equivalent to $D_{\g u}(\mu)\approx -\partial_{tt}\g u$.

As we can imagine, approaching $A_{\g u}$ by $D_{\g u}$ can be very optimistic in strongly heterogeneous media. In this article, we directly study the operator $A_{\g u}$ in order to stably invert it  when it is possible.

\subsection{Outline of the article and the main results}

In this paper, we study a new direct inversion method for reconstructing coefficients of the elasticity tensor from internal fields measurements.
The outline of the paper is the following:
\begin{itemize}
\item[(i)] we introduce a general weak formulation for the inverse problem (Section \ref{sec:inverseweak});
\item[(ii)] we theoretically study the operator to invert (null space, closed range property, stability of the inversion) in the isotropic shear modulus imaging case (Section \ref{sec:sheartoforce});
\item[(iii)] we study the numerical performance of the method  in the previous case as well as in some more general frameworks, in particular,  for reconstructing both Lam\'e coefficients and anisotropic media (Section \ref{sec:num}).
\end{itemize}

The strength of this direct inversion method is the fact that it combines the low computational cost of an algebraic inversion method (compared to the costly iterative methods) without requiring the high regularity assumptions on the coefficients to be reconstructed.

The determination of the null space of the operator to invert (Theorem \ref{theo:sbv}) and the main stability result (Theorem \ref{them:stability}) for the inversion are obtained under a weak regularity assumption on the coefficients of  the elasticity tensor, allowing the reconstruction of discontinuous coefficients. To the best of our knowledge, it is the first time that a non-iterative inversion method is theoretically studied for discontinuous elastic coefficients. The main consequence of this theoretical study is that the stable reconstruction of a discontinuous shear modulus is possible from one single measurement.

The numerical experiments shown in Section \ref{sec:num} are also new. We introduce a $\cP^1-\cP^0$ finite elements basis for the resolution of the inverse problem. We show that the sharp reconstruction of discontinuous coefficients from a minimal number of measurements is possible, and therefore that the theoretical results of Section \ref{sec:sheartoforce}  numerically hold in a more general setting.

%

\section{The inverse problem}\label{sec:inverseweak}

\subsection{The direct weak formulation}

Consider a smooth elastic medium $\widetilde \Omega\subset\R^d$, $d=2$ or $3$ with linear elastic properties described by the unknown elasticity tensor $\g C(x)\in T^4_\sym$. The space $T^4_\sym$ as well as the different tensor products are all defined in Definition \ref{de:tensornotation}. We assume that the unknown tensor $\g C$ belongs to $L^\infty(\widetilde \Omega,T^4_\sym)$. Consider now that one has measured internal displacement field $\g u$, which corresponds to the internal force density $\g f$  in some smooth subdomain of interest $\Omega\subset\widetilde\Omega$. The field $\g u \in H^1(\widetilde \Omega,\R^d)$ satisfies the linear elasticity equation 

\begin{equation*}
  - \nabla \cdot(\g C : \nabla^s {\g u}) ={\g f}^{} \quad \text{ in } \Omega,
\end{equation*}
in the sense of distributions, i.e., in $\cD'(\Omega,\R^d)$. In the case of multiple measurements, we assume knowledge of a finite number $n$ of force densities $\g f^{\ell}$ and the corresponding displacement fields $\g u^{\ell}$ satisfying linear elasticity equation

\begin{equation*}
  - \nabla \cdot(\g C : \nabla^s \g u^{\ell}) =\g f^{\ell} \quad \text{ in } \Omega,  \label{eqn:PDE_u}
\end{equation*}
in the sense of distributions. As $\g C\in L^\infty(\Omega,T^4_\sym)$ and $\nabla^s\g u\in L^2(\Omega,\R^{d\times d}_\sym)$, the previous equation makes sense in $H^{-1}(\Omega,\R^d)$ writing

\begin{equation}\label{eq:vf}
\int_{\Omega} (\g C: \nabla^s \g u^{\ell}) :  \nabla^s {\g v}=  \left< \g f^{\ell},{\g v}\right>_{H^{-1},H^1_0} , \quad  \forall {\g v}\in H^1_0(\Omega,\R^d).
\end{equation}
Here, $\left< \; , \; \right>_{H^{-1},H^1_0}$ denotes the duality pairing between $H^{-1}$ and $H^1_0$. 
Note that, by considering this problem in $H^{-1}$ (taking test functions in $H^1_0$), we naturally forget what happens on the boundary. This classical weak formulation naturally introduces a bilinear form $a_{\g C}$ such that the forward problem reads

\begin{equation*}
a_{\g C}(\g u, \g v)= l(\g v), \quad \forall \g v \in H^1_0(\Omega,\R^d).
\end{equation*}

The method that we present is based on the simple idea of changing the point of view and, given a vector field $\g u$,  writing (\ref{eq:vf}) as a bilinear form acting on $(\g C, \g v)$ instead of $(\g u,\g v)$:

\begin{equation}\label{eq:vfC}
a_{\g u}(\g C, \g v) = l(\g v), \quad \forall \g v \in H^1_0(\Omega,\R^d).
\end{equation}
In order to stay in a Hilbert space framework,  we make the non-restrictive assumption that the strain tensor $\nabla^s\g u$ is bounded. We will stand under this hypothesis in the whole paper.

\begin{definition}[Stiffness-to-force operator] If $\nabla^s\g u\in L^\infty(\Omega,\R^{d\times d}_\sym)$, then \eqref{eq:vfC} canonically defines the bounded operator:

\begin{equation*}\begin{aligned}
 A_{\g u}:L^2(\Omega,T^4_\sym) &\lra H^{-1}(\Omega,\R^d)\\
\g C &\longmapsto -\nabla\cdot(\g C:\nabla^s \g u), 
\end{aligned}
\end{equation*}
which is called the stiffness-to-force operator.
\end{definition}

Hence, the general inverse problem that we want to solve simply reads $A_{\g u}\g C=\g 0$ in the elastostatic  case and $A_{\g u}\g C=\g f$ in the elastodynamic case. 

In most of the cases, we do not look for a general tensor  $\g C(x)\in T^4_\sym$ and we  know, a priori, that it can be decomposed as a sum of known directions:

\begin{equation*}
 \g C(x) =  \sum_{k=1}^{N}  \mu^{(k)}(x)\g C^{k}, \quad \forall x \in \Omega,
\end{equation*}
where  $\mu^{(k)}$ are {\bf unknown} functions of $L^2(\Omega)$ and $\g C^{k}$ are {\bf known} constant tensors.
For instance, in isotropic cases $\g C(x)=2\mu(x)\g I$ or $\g C(x)=2\mu(x)\g I+\lambda(x)I\otimes I$, where $\mu$ and $\lambda$ are the two Lam\'e parameters. Hence, the reconstruction of $\g C(x)$ can be obtained from the reconstruction of the $\mu^{(k)}(x)$  solutions of the variational problem:

\begin{equation}\nonumber
 \sum_{k=1}^{N} \int_{\Omega} \mu^{(k)}(x)  \big(\g C^{k}: \nabla^s \g u^{\ell}(x)\big) :
\nabla^s {\g v(x)}\td x =  \left< \g f^{\ell},{\g v}\right>_{H^{-1},H^1_0}\quad  \forall {\g v}\in H^1_0(\Omega,\R^d), \end{equation}
or equivalently

\begin{equation*}
\sum_{k=1}^{N} \left<A^{\g C^{k}}_{\g u^{\ell}}(\mu^{(k)}),{\g v}\right>_{H^{-1},H^1_0} =   \langle \g f^{\ell},{\g v} \rangle_{H^{-1},H^1_0}.
\end{equation*}
Here, for all ${\g u} \in W^{1,\infty}(\Omega,\R^d)$ and $\g C\in T^4_\sym$, the bounded linear operator $ A^{\g C}_{{\g u}} : L^2(\Omega) \rightarrow H^{-1}(\Omega, \R^d) $ is defined by

\begin{equation*}
A^{\g C}_{{\g u}}(\mu)= -\nabla\cdot(\mu\g C:\nabla^s\g u).
\end{equation*}
The general recovery problem with multiple measurements reads as the following system:
\begin{align*}
 \begin{pmatrix}
  A^{\g C^{1}}_{{\g u}^{1}} & \ldots &  A^{\g C^{N}}_{{\g u}^{1}} \\
  \vdots &  & \vdots \\
   A^{\g C^{1}}_{{\g u}^{n}} & \ldots &  A^{\g C^{N}}_{{\g u}^{n}} 
 \end{pmatrix} \begin{pmatrix} \mu^{(1)} \\ \vdots \\ \mu^{N)}
  \end{pmatrix} = \begin{pmatrix} \g f^{1} \\ \vdots \\ \g f^{n}
  \end{pmatrix}.
\end{align*}

\begin{remark}
 As we will see in Section \ref{sec:num}, this formulation is naturally adapted to a finite element discretization when looking for the coefficients 
 $\mu^{(k)}(x)$ in $\mathcal{P}^0$ using test functions in $\mathcal{P}^1_0$.
\end{remark}

\subsection{Existing stability results}

Although the question of the injectivity   is very hard without extra regularity assumptions on $\g C$, 
there exists some stability results for the reconstruction of the tensor $\g C$. The most important one can be found in \cite{bal2015reconstruction}.
We include here, for the sake of completeness, the following stability result for data with $W^{2,\infty}$ regularity for $\g u$.

\begin{theorem} [see {\cite{bal2015reconstruction}}\label{theo:bal}] ~ Let $({\g u^{1},\dots,\g u^{n}})$ and $({\g {\tilde u}^{1},\dots,\g {\tilde u}^{n}})$ be two families of displacement fields of size
$n = d(d+1)/2 + N/d$
and $A:=(A_{\g u^{1}},\dots, A_{\g u^{n}})$,  $\tilde A:=(A_{\tilde{\g u}^{1}},\dots, A_{\tilde{\g u}^{n}})$ be the corresponding  multiple data stiffness-to-force operators. If the tensors  $\g C$ and $\widetilde{\g C}$ satisfy

\begin{equation}\nonumber
A\g C = \g 0\quad\text{ and }\quad\tilde A\widetilde {\g C} = \g 0,
\end{equation}
under some extra assumptions on the linear independence  of these families of displacement fields, then $\g C$ and $\widetilde{\g C}$ can each be uniquely reconstructed over $\Omega$ up to a multiplicative constant. Moreover, if we assume that $\norm{\g C}{L^\infty(\Omega)}=\|\widetilde {\g C}\|_{L^\infty(\Omega)} $, then 

\begin{equation}\nonumber
\|\g C - \widetilde{\g C} \|_{L^{\infty}(\Omega)} + \|\nabla \cdot \g C - \nabla\cdot\widetilde{\g C}\|_{L^{\infty}(\Omega)}  \leq k\sum_{\ell=1}^{n} \| \nabla^s \g u^{\ell} - \nabla^s \tilde{{\g u}}^{(\ell)} \|_{W^{1,\infty}(\Omega)},
\end{equation}
where $k$ does not depend on $\g C$ and $\widetilde{\g C}$.
\end{theorem}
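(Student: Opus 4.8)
The plan is to convert the two homogeneous constraints $A\g C=\g 0$ and $\tilde A\widetilde{\g C}=\g 0$ into a first-order system in which the gradient of the unknown tensor is expressed algebraically in terms of the tensor itself, and then to integrate that system along paths in $\Omega$. First I would expand each scalar equation $-\nabla\cdot(\g C:\nabla^s\g u^\ell)=0$ by the Leibniz rule, separating the term where the divergence falls on $\g C$ from the term where it falls on the strain:
\[
-\sum_{j,k,m}(\partial_j\g C_{ijkm})\,(\nabla^s\g u^\ell)_{km}=\sum_{j,k,m}\g C_{ijkm}\,\partial_j(\nabla^s\g u^\ell)_{km},\qquad 1\le i\le d.
\]
The left-hand side is linear in the first derivatives of the unknown tensor with coefficients built from the measured strains, while the right-hand side is linear in $\g C$ with coefficients built from the first derivatives of the strains. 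This is exactly why the hypothesis asks for $\nabla^s\g u^\ell\in W^{1,\infty}$, and why the stability is measured in the $W^{1,\infty}$ norm of the strains.

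Second, I would stack these relations over all measurement indices $\ell=1,\dots,n$ and all component indices $i=1,\dots,d$. The count $n=d(d+1)/2+N/d$ is chosen precisely so that, under the linear-independence hypothesis on the family $\{\nabla^s\g u^\ell\}$, the coefficient matrix multiplying the collection $\{\partial_j\g C_{ijkm}\}$ has full rank. The heart of the argument is to verify this rank condition: show that the $d(d+1)/2$-dimensional span of the strain tensors, together with the extra $N/d$ measurements, lets one solve algebraically for the full gradient, yielding a closed relation
\[
\nabla\g C(x)=\g F(x)\,\g C(x)\quad\text{a.e.\ in }\Omega,
\]
where $\g F\in L^\infty(\Omega)$ is assembled explicitly from $\nabla^s\g u^\ell$ and its first derivatives. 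The same reduction applied to the perturbed data gives $\nabla\widetilde{\g C}=\widetilde{\g F}\,\widetilde{\g C}$.

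Third, once $\nabla\g C$ is controlled by $\g C$ itself, uniqueness up to a multiplicative constant follows as for a linear homogeneous first-order ODE: on the connected domain $\Omega$ the solution of $\nabla\g C=\g F\g C$ is determined by its value at one point, and the normalization $\norm{\g C}{L^\infty(\Omega)}=\|\widetilde{\g C}\|_{L^\infty(\Omega)}$ fixes that free constant. For stability I would subtract the two gradient relations,
\[
\nabla(\g C-\widetilde{\g C})=\g F(\g C-\widetilde{\g C})+(\g F-\widetilde{\g F})\,\widetilde{\g C},
\]
and bound $\|\g F-\widetilde{\g F}\|_{L^\infty}$ by $\sum_\ell\|\nabla^s\g u^\ell-\nabla^s\tilde{\g u}^\ell\|_{W^{1,\infty}}$ using the explicit dependence of $\g F$ on the strain data and the lower bound on the rank coming from linear independence. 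A Gronwall argument along paths then propagates the pointwise bound on $\g C-\widetilde{\g C}$ from the normalization point to all of $\Omega$, giving the $L^\infty$ control; feeding this back into the gradient relation yields the bound on $\nabla\cdot\g C-\nabla\cdot\widetilde{\g C}$, and summing produces the stated inequality.

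The main obstacle I expect is the algebraic rank step: proving that the stacked coefficient matrix is uniformly invertible, so that $\nabla\g C$ is genuinely recoverable, and that its inverse is bounded in terms of the linear-independence constants of the strain family. This is where the precise count $n=d(d+1)/2+N/d$ and the unstated nondegeneracy hypotheses are consumed, and where the uniformity needed for a constant $k$ independent of $\g C$ and $\widetilde{\g C}$ must be extracted.
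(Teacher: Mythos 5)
This theorem is not proved in the paper at all: it is quoted verbatim from \cite{bal2015reconstruction} ``for the sake of completeness,'' and the authors explicitly refer the reader to that reference for the proof. So there is no in-paper argument to compare yours against; the only fair comparison is with the strategy of the cited work, and your outline does capture its general shape (Leibniz expansion of $\nabla\cdot(\g C:\nabla^s\g u^\ell)=0$, algebraic elimination using sufficiently many linearly independent strain tensors, then integration of a first-order system to get uniqueness up to a constant and Lipschitz stability in terms of $\|\nabla^s\g u^\ell-\nabla^s\tilde{\g u}^\ell\|_{W^{1,\infty}}$).

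That said, your proposal has a genuine gap, and you name it yourself: the entire mathematical content of the theorem sits in the ``algebraic rank step,'' i.e.\ showing that the stacked coefficient matrix built from the strains is uniformly invertible under the (unstated) linear-independence hypotheses, so that the derivatives of the unknown can be solved for pointwise with a bound independent of $\g C$. Asserting that the count $n=d(d+1)/2+N/d$ ``is chosen precisely so that'' this matrix has full rank is not a proof of it; without that step neither the closed relation $\nabla\g C=\g F\,\g C$ nor the uniformity of the constant $k$ is established. A secondary issue is that you aim to recover the \emph{full} gradient $\nabla\g C$, whereas the stated estimate only controls $\g C$ and $\nabla\cdot\g C$; in the cited framework the natural pointwise unknowns are the pair $(\g C,\nabla\cdot\g C)$ (equivalently the scalars $\mu^{(k)}$ and their gradients when $\g C=\sum_k\mu^{(k)}\g C^k$), and the dimension count should be checked against that set of unknowns rather than against all of $\nabla\g C$. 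As written, your text is a plausible reconstruction of the roadmap of \cite{bal2015reconstruction}, but it defers exactly the step that the theorem consists of.
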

We refer the reader to \cite{bal2015reconstruction} for more details.

\begin{remark} If we assume that the elasticity tensor $\g C$ is of the form  ${\g C}  = 2 \mu {\g I}$ 
or $\g C = 2 \mu {\g I} + \lambda  I \otimes I$, in dimension $2$, then 
Theorem \ref{theo:bal} implies that one needs at least $4$ sets of measurements in order to reconstruct $\g C$ up to a multiplicative constant. Moreover, one needs $\nabla^s\g u^\ell$ to be Lipschitz.
\end{remark}

\subsection{Classical elastic media inversion problems}

\subsubsection{Shear modulus inversion}

In the ideal case where $\lambda$ is equal to zero or it is assumed to be known in the medium, the elasticity equation reads as 
$$ - \nabla \cdot( \mu \nabla^s {\g u}^{\ell}) ={\g f}^{\ell} \quad\text{ in } H^{-1}(\Omega,\R^d)$$
which corresponds to the previous model with  $N=1$, $\mu = \mu^{(1)}$ and $\g C^{1} = {\g I}$.
Note that in the static case ($\g f^{\ell} = 0$), 
the recovery problem is equivalent to finding $\mu$ in the null space of $(A^{{\g I}}_{\g u})$. 
In particular, \emph{formally}, if $\g u^{\ell}$ is smooth and if $\nabla^s \g u^{\ell}$ 
is invertible, then 
$$ \nabla \cdot( \mu \nabla^s \g u^{\ell}) = 0 \ \mbox{ and }  \ \mu> 0 $$ implies that 
$$ \mu \nabla^s \g u^{\ell}
\left(\nabla\log(\mu) +   (\nabla^s \g u^{\ell})^{-1} \Delta^s\g u^{\ell}  \right) = 0,$$
which suggests that this equation has a non-trivial solution if and only if there  exists $\varphi$ such as 
$$(\nabla^s \g u^{\ell})^{-1} \Delta\g u^{\ell} = \nabla \varphi.$$
In that case,  
$$ N(  A^{{\g I}}_{\g u^{\ell}}) = \text{span}\left\{ \exp(-\varphi)\right\}.$$
The problem of showing that the null space of $ A^{{\g I}}_{\g u^{\ell}}$ is at most of dimension one
has been solved in  \cite{barbone2007elastic} in the case of \emph{smooth coefficients}. The  aim 
of the next section is to generalize this approach for \emph{discontinuous} strain tensors.  

\begin{remark} The method developed in \cite{barbone2007elastic} can be numerically implemented
by using the Helmholtz decomposition of  $(\nabla^s \g u^{\ell})^{-1} \Delta\g u^{\ell}$.   
Moreover, it suggests also that only one set of data is required to reconstruct $\mu$ up to a multiplicative constant.
\end{remark}

\subsubsection{Inversion of Lam\'e coefficients}

In the general isotropic case, i.e, $P=2$, $\mu^{(1)} = \mu$, $\g C^1 = {\g I}$, $\mu^{(2)} = \lambda$ and $\g C^2 = I \otimes I,$ 
the coefficient  $\lambda$ is associated to the operator $A^{I\otimes I}_{{\g u}}$ defined by 
$$ A^{I\otimes I}_{{\g u}}(\lambda) = \nabla \left( \lambda \nabla \cdot {\g u} \right).$$
Formally,  its null space is at most of dimension one and is given by    
$$ N\left( A^{I\otimes I}_{{\g u}}\right) =  \Span\left\{ \frac{1}{\nabla \cdot   {\g u}} \right\}.$$
In practice, this shows that the reconstruction of Lam\'e coefficients $(\mu,\lambda)$ requires at least two sets of data 
${\g u}^1$ and ${\g u}^2$: 
\begin{align*}
 \begin{pmatrix}
  2 A^{{\g I}}_{{\g u}^{1}} &   A^{I \otimes I}_{{\g u}^{1}} \\ 
  \nm
  2 A^{{\g I}}_{{\g u}^{2}} &   A^{I \otimes I}_{{\g u}^{2}}
 \end{pmatrix} 
 \begin{pmatrix}
 \mu \\ 
 \lambda
  \end{pmatrix} = 
  \begin{pmatrix} \g f^{(1)} \\ \nm  \g f^{(2)}
  \end{pmatrix},
\end{align*}
which satisfy the necessary condition  $ N\left( A^{I\otimes I}_{{\g u}^1}\right) \neq N\left( A^{I\otimes I}_{{\g u}^2}\right) $ or in other terms, 
$$ \forall c \in \R, \quad  \nabla \cdot {\g u}^1 \neq  c \nabla \cdot {\g u}^2.$$

\subsubsection{Anisotropic medium inversion}

The last example is an anisotropic medium such that the tensor takes 3 independent directions:
$${\g C(x)} =  \mu^{(1)}(x) {\g C}^{1} + \mu^{(2)}(x) {\g C}^{2} + \mu^{(3)}(x) {\g C}^{3},$$
where the tensors ${\g C}^{1}$, ${\g C}^{2}$ and ${\g C}^{3}$ are defined by  
  \begin{equation} \label{add1} {\g C}^{1} : A = \left( \begin{matrix}
                                    A_{11}  & 0 \\
                                   0 & 0
                                  \end{matrix} \right), \quad {\g C}^{2} : A = \left( \begin{matrix}
                                   0 & 0 \\
                                   0 & (A)_{22}
                                  \end{matrix} \right),
                 \end{equation}
 and 
  \begin{equation} \label{add2}  {\g C}^{3} : A = \left( \begin{matrix}
                                   0 & (A_{12} + A_{21})/2 \\
                                   (A_{12} + A_{21})/2  & 0
                                  \end{matrix} \right). \end{equation}
for any squared matrix $A$.

This is an ideal case, and doesn't necessarily correspond to a biomedical imaging application. It is used as a an example to show the versatility of our method. Anisotropic shear wave imaging is of great use in cardiac imaging. The anisotropic model for the myocardium and the imaging of the degree of anisotropy will be investigated in a forthcoming paper.

 \subsection{Regularity of the coefficients of elasticity tensor}                                  

 The choice of the functional spaces for the elasticity tensor's coefficients and for $\g u$ is a crucial question.
 The standard theory of elliptic systems shows that the regularity of $\g C$'s coefficients determines the regularity of the solution $\g u$ of the linear elasticity equation.  For instance, it is well-known that, under some ellipticity conditions, if the coefficients of $\g C$ are in $L^\infty$, the solution $\g u$ is in $H^1(\Omega,\R^2)$ and therefore no more than $L^2$ regularity can be expected for $\nabla^s \g u$.
 The standard H\"older theory for elliptic systems tells us that if the coefficients are piecewise H\"older continuous, then the same regularity can be expected for $\nabla^s \g u$. 
 
 As we mainly focus on imaging mechanical properties of biological tissues, we should use an appropriate model for the elastic coefficients. Typically, it is not realistic to assume that the elastic coefficients are everywhere differentiable, since biological tissues are often constituted of different types of embedded materials which exhibit discontinuities. 
 
 A good acceptable model for a biological medium is to assume that the biological parameters are piecewise smooth with smooth discontinuity surfaces. Out of these discontinuities,  we suppose a Sobolev type smoothness. We call such a space of function $W^{1,p}_{\pw}(\Omega)$, and give its precise definition in Definition \ref{de:A}.

 We will also use spaces that include discontinuous functions and that are more general than those in $W^{1,p}_{\pw}(\Omega)$. We introduce the subspace $SBV(\Omega) \subset BV(\Omega)$ of the functions of bounded variations whose discontinuity sets have no Cantor parts. The full precise definition is given in Definition \ref{de:SBV}.
 
 The relations between the functional spaces that we use are the following:
 \begin{align*}
 W^{1,p}(\Omega)\subset W^{1,p}_{\pw}(\Omega) \subset SBV^p(\Omega) \subset L^p(\Omega).
 \end{align*}
 
 \begin{remark} The condition that the coefficients of the tensor belong to $W^{1,p}_{\pw}(\Omega)$ or $SBV(\Omega)$ makes an important difference between this work and the aforementioned theoretical works on elastography. Under this assumption, we cannot assume that $\nabla^s \g u \in W^{1,p}$ and the analysis becomes more complicated. This is the reason for Section \ref{sec:sheartoforce} to be quite lengthy and technical.
 \end{remark}

\section{Shear modulus imaging: invertibility and stability in the isotropic case}\label{sec:sheartoforce}

In this section, we study the so-called \emph{shear-to-force operator} \begin{align*}
A^{\g I}_{\g u} : L^2(\Omega)&\longrightarrow H^{-1}(\Omega,\R^d)\\
 \mu & \longmapsto -\nabla \cdot \left(\mu \nabla^s \g u\right).
\end{align*} 
The outline of this section is the following:
\begin{enumerate}
\item[(i)] In Subsection \ref{sec:nullspace}, we study the null space of $A^{\g I}_{\g u}$ and we extend the results of \cite{barbone2007elastic} by showing that under low regularity assumptions for $\g u $ (typically, $SBV$ type regularity for $\nabla^s\g u$), the null space is of dimension zero or one.
\item[(ii)] In Subsection \ref{sec:closedrange}, we study the solvability of the inverse problem by giving sufficient conditions on $\nabla^s\g u$ for the operator $A^{\g I}_{\g u}$ to be of closed range, therefore ensuring the continuity of the inverse on the orthogonal of the null space. We first prove that the operator $A^{\g I}_{\g u}$ as closed range under invertibility and $W^{1,p}$ smoothness assumption for $\nabla^s\g u$, for some $p>d$. We then relax the regularity assumption to a piecewise regularity $\nabla^s\g u \in W^{1,p}_{\pw}(\Omega)$, ensuring the solvability of the inverse problem when looking for piecewise smooth shear modulii.
\item[(iii)] In Subsection \ref{sec:stability}, we give quantitative results on the stability of the inversion of $A^{\g I}_{\g u}$.
\end{enumerate}

\subsection{Spaces of discontinuous functions}

In order to prove invertibility and stability of the inverse problem under minimal smoothness assumptions on the coefficients and the data, we introduce here two spaces of discontinuous functions.

\subsubsection{The space $W^{1,p}_{\pw}(\Omega)$}

\begin{definition}\label{de:A}
A function $f$ is said to be in $W^{1,p}_{\pw}(\Omega)$ for $1\leq p\leq +\infty$, if there exists a smooth covering  $\Omega_1$,\ldots,$\Omega_k$, for $k\geq 1$,  such that
\begin{itemize}
\item[(i)]  $\Omega_i$ is a smooth open connected subdomain of $\Omega$ for every $i\in \{ 1,\ldots, k\}$;
\item[(ii)] $\Omega_i\cap \Omega_j=\emptyset$ if $i\neq j$;
\item[(iii)] $\displaystyle{\bigcup_{i=1}^k \overline{\Omega_i} = \overline\Omega};$
\item[(iv)] $\forall i \in \{ 1,\ldots, k\},\quad f\big\vert_{\Omega_i}\in W^{1,p}\left(\Omega_i\right).$
\end{itemize}
\end{definition}

In order to have an even more general set of discontinuous functions, we introduce the following space inspired by the space $SBV(\Omega)$.

\subsubsection{The space $\mathrm{SBV}^p(\Omega)$}

Since the derivative of a function $f\in BV(\Omega)$ can be decomposed as:
\begin{align*}
Df= \nabla f \mathcal{H}^d + [f]{\g n}_S \mathcal{H}^{d-1}_S + D_cf,
\end{align*}
where $\mathcal{H}^d$ is the Lebesgue measure on $\Omega$, $\mathcal{H}_S^{d-1}$ is the
surface Hausdorff measure on a rectifiable surface $S$, $\g n_S$ is a
normal vector defined almost everywhere on $S$, $ f\in L^1(\Omega)$ is the smooth
derivative of $f$, $[f]\in L^1(S,\mathcal{H}^{d-1}_S)$ is the jump of $f$ across $S$ and
$D_c f$ is a vector measure supported on a set of Hausdorff dimension less than $(d-1)$, which means that its ${(d-1)}$-Hausdorff-measure is zero. The well-known space $SBV(\Omega)$ introduced by De Giorgi and Ambrosio \cite{ambrosioSBV} is the subclass of $BV(\Omega)$ of functions whose derivative Cantor parts are zero: $D_cf=\g 0$. Following this idea, we introduce a very large piecewise-$W^{1,p}$ class of functions:

\begin{definition}\label{de:SBV} For  $1\leq p\leq +\infty$, we define
\begin{align*}
\mathrm{SBV}^p(\Omega)=\left\{f\in \mathrm{SBV}(\Omega)\cap L^p(\Omega),\ \nabla f\in L^p(\Omega,\R^d)\right\},
\end{align*}
where $\nabla f$ is the Lebesgue part of the measure $Df$. 
\end{definition}

Note that we clearly have the inclusion $W^{1,p}_{\pw}(\Omega)\subset SBV^p(\Omega)$.

\subsection{Null space of the shear-to-force operator} \label{sec:nullspace}

In this subsection, we prove Theorems \ref{theo:sobolev} and \ref{theo:sbv}, which give simple conditions on $S:=\nabla^S \g u$ in order to ensure that the operator $A^{\g I}_{\g u}$ has a null space of dimension zero or one.

\begin{theorem}[Characterization in $W^{1,p}$]\label{theo:sobolev} Assume that $\nabla^s\g u\in L^\infty(\Omega,\R^{d\times d}_\sym)\cap W^{1,p}(\Omega,\R^{d\times d})$ for some $p>d$ and that $|\det \nabla^s\g u|\geq c> 0$. Then the space
\begin{equation}\nonumber
K_{\g u}:=\left\{\mu\in L^2(\Omega),\ \nabla\cdot(\mu \nabla^s\g u)=0\right\},
\end{equation}
is of dimension zero or one. In the second case, there exists a positive continuous function $\mu_0$, such that $K_{\g u}=\text{span}\{ \mu_0\}$.
If $\Omega$ is Lipschitz, then $\mu_0$ belongs to $W^{1,p}(\Omega)$.
\end{theorem}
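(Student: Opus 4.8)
The plan is to reduce the divergence equation to a pointwise first-order relation for $\mu$, upgrade the regularity of any element of $K_{\g u}$ by a bootstrap, and then run a Morrey-type unique continuation to force every nonzero solution to be nowhere vanishing; once this is known, one-dimensionality is immediate. Throughout write $S:=\nabla^s\g u$. Since $S$ is symmetric, bounded by some $M$, and $|\det S|\ge c>0$, its eigenvalues are bounded below in modulus by $c/M^{d-1}$, so $S^{-1}=\mathrm{adj}(S)/\det S$ is bounded; moreover $p>d$ gives $S\in C^{0,\alpha}(\Omega)$ by Morrey, hence $S^{-1}$ is continuous. I set $\g b:=-S^{-1}(\nabla\cdot S)$, a \emph{fixed} field (independent of $\mu$) lying in $L^p(\Omega,\R^d)$ because $\nabla\cdot S\in L^p$.

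First I would record the pointwise equation. Writing $\nabla\cdot(\mu S)=0$ in $\cD'(\Omega)$ and expanding (a standard mollification argument justifies the product rule since $S\in W^{1,p}\cap L^\infty$) gives $S\nabla\mu=-\mu(\nabla\cdot S)$, i.e. $\nabla\mu=\mu\,\g b$. Starting from $\mu\in L^2$, this already puts $\nabla\mu\in L^r$ with $1/r=1/2+1/p$; the Sobolev embedding then improves the integrability of $\mu$, and since $p>d$ each iteration decreases the reciprocal exponent $1/q$ by the fixed amount $1/d-1/p>0$. After finitely many steps $\mu\in L^\infty(\Omega)$, whence $\nabla\mu=\mu\g b\in L^p$ and $\mu\in W^{1,p}_{\loc}(\Omega)\hookrightarrow C^0(\Omega)$.

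The heart of the argument — and the step I expect to be the main obstacle — is showing that a nonzero $\mu\in K_{\g u}$ cannot vanish anywhere. The difficulty is that $\g b$ is only $L^p$, so a naive Gronwall estimate along curves is unavailable. Instead I would use the scale-invariant Morrey inequality on a ball $B_\rho=B(x_0,\rho)\Subset\Omega$: if $\mu(x_0)=0$ then, using $|\nabla\mu|=|\mu||\g b|$ pointwise,
$$\sup_{B_\rho}|\mu|\le C\,\rho^{1-d/p}\,\|\nabla\mu\|_{L^p(B_\rho)}\le C\,\rho^{1-d/p}\,\|\g b\|_{L^p(B_\rho)}\,\sup_{B_\rho}|\mu|.$$
Because $\rho^{1-d/p}\|\g b\|_{L^p(B_\rho)}\to0$ as $\rho\to0$ (absolute continuity of the integral, and here $p>d$ is decisive), the prefactor is $<1$ for small $\rho$, forcing $\mu\equiv0$ on $B_\rho$. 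Thus $\{\mu=0\}$ is open; being also closed by continuity, on a connected $\Omega$ it is empty once $\mu\not\equiv0$. Hence every nonzero element of $K_{\g u}$ is nowhere zero and, by continuity, of constant sign.

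Finally I would deduce the dichotomy. If $K_{\g u}=\{0\}$ the dimension is zero. Otherwise pick a nonzero $\mu_0$; it never vanishes, and after multiplying by $\pm1$ we may take $\mu_0>0$, continuous. For any other $\mu\in K_{\g u}$, both satisfy $\nabla\mu=\mu\g b$ and $\nabla\mu_0=\mu_0\g b$, so $\mu_0\nabla\mu-\mu\nabla\mu_0=0$ and $\nabla(\mu/\mu_0)=0$ on compact subsets (where $\mu_0$ is bounded below); connectedness of $\Omega$ then gives $\mu/\mu_0\equiv\text{const}$, i.e. $K_{\g u}=\Span\{\mu_0\}$. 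The stated global regularity follows from $\mu_0\in L^\infty$ together with $\nabla\mu_0=\mu_0\g b\in L^p(\Omega)$: when $\Omega$ is Lipschitz this yields $\mu_0\in W^{1,p}(\Omega)$.
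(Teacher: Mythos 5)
Your proposal is correct, and on the decisive step it takes a genuinely different route from the paper. The reduction to the pointwise relation $\nabla\mu=\mu\,\g b$ with $\g b=-S^{-1}(\nabla\cdot S)\in L^p$ and the subsequent integrability bootstrap (gaining $1/d-1/p$ on the reciprocal exponent at each step until $\mu\in L^\infty$, then $\mu\in W^{1,p}_{\loc}\hookrightarrow\cC^0$) coincide with the paper's Propositions \ref{prop_dec} and \ref{prop_reg}; the paper makes the product rule rigorous by substituting the test field $\g v=S^{-1}\g w$ rather than by a commutator/mollification lemma, but these are interchangeable. Where you diverge is the non-vanishing argument. The paper (Proposition \ref{prop_carac}) introduces the potential equation $\nabla\nu=\g b$, characterizes $K_{\g u}\neq\{0\}$ as equivalent to $\g b$ being conservative, and rules out interior zeros by taking $\nu=\ln\mu$ on a ball of positivity touching the zero set and invoking $W^{1,p}\hookrightarrow L^\infty$ to contradict the vanishing on the boundary of that ball. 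You instead prove directly that the zero set of any solution is open, via the scale-invariant Morrey oscillation bound $\sup_{B_\rho}|\mu|\le C\rho^{1-d/p}\|\g b\|_{L^p(B_\rho)}\sup_{B_\rho}|\mu|$ at a zero $x_0$, which becomes a strict contraction for small $\rho$; connectedness then finishes. Your route is more direct and avoids a delicate point in the paper's log argument (one must justify that $\ln\mu$ lies in $W^{1,p}(B)$ even though it blows up to $-\infty$ near the zero on $\partial B$, which requires an extra word); the paper's route buys the explicit representation $\mu_0=e^\nu$, the conservative-field criterion for nontriviality of $K_{\g u}$, and the uniform lower bound $\mu_0\ge m>0$ on Lipschitz domains, which is reused later in the closed-range theorem. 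Your one-dimensionality step ($\nabla(\mu/\mu_0)=0$) and the global $W^{1,p}(\Omega)$ regularity on Lipschitz domains (rerunning the bootstrap globally) match the paper's conclusion; only minor points are left implicit, namely the local lower bound on $\mu_0$ needed to apply the quotient rule on compact subsets, which your continuity and strict positivity already supply.
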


\begin{theorem}[Characterization in $SBV^{p}$] \label{theo:sbv} Assume that $\nabla^s\g u\in L^\infty(\Omega,\R^{d\times d}_\sym)\cap SBV^{p}(\Omega,\R^{d\times d})$ for some $p>d$ and that $|\det \nabla^s\g u|\geq c> 0$. Then, the space
\begin{equation}\nonumber
K_{\g u}:=\left\{\mu\in L^2(\Omega),\ \nabla\cdot(\mu \nabla^s\g u)=0\right\}
\end{equation}
is of dimension zero or one. 
\end{theorem}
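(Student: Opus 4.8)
```latex
\textbf{Proof proposal for Theorem \ref{theo:sbv}.}

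The plan is to reduce the $SBV^p$ case to the already-established $W^{1,p}$ case of Theorem \ref{theo:sobolev} by exploiting the structure of elements of the null space across the discontinuity surfaces. Suppose $\mu_1,\mu_2$ are two linearly independent elements of $K_{\g u}$; I want to derive a contradiction, or equivalently to show that any nonzero $\mu\in K_{\g u}$ determines all others up to scalar. The starting observation is that for $\mu\in K_{\g u}$ the equation $\nabla\cdot(\mu\, S)=\g 0$ holds in $\cD'(\Omega,\R^d)$ with $S:=\nabla^s\g u\in SBV^p\cap L^\infty$. First I would localize: away from the (rectifiable, $\cH^{d-1}$) jump set of $S$, the field $S$ has a Lebesgue gradient in $L^p$, so on each piece where $S$ is Sobolev the local analysis of Theorem \ref{theo:sobolev} applies and produces, on each such region, a positive solution $\mu_0$ of class $W^{1,p}$, unique up to a multiplicative constant, provided $|\det S|\geq c>0$ guarantees that $S^{-1}\in L^\infty$ and the logarithmic transport equation $\nabla\log\mu + (S)^{-1}\,\mathrm{div}\,S = \nabla\varphi$ type relation closes. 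The key point is that $\mu\in L^2$ and $\mu S\in L^2$ has a divergence that is a genuine function (zero), which forces a transmission condition across the jump set $S_S$.

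The heart of the argument is the jump (transmission) condition. Writing $D S$ with its Lebesgue part $\nabla S\,\cH^d$ and its surface part $[S]\,\g n\,\cH^{d-1}$ (no Cantor part, by the $SBV$ hypothesis), the vanishing of $\nabla\cdot(\mu S)$ as a distribution splits into an interior (a.e.) part and a singular part concentrated on the jump surface. The singular part yields the continuity of the normal component of the flux: $[\mu\, S\,\g n]=\g 0$ across each smooth piece of the discontinuity surface, i.e. the traces of $\mu S\g n$ from the two sides agree $\cH^{d-1}$-a.e. This is exactly the condition that lets the piecewise $W^{1,p}$ solutions glue together into a global solution, and it pins the ratio of the two boundary constants on either side of each interface. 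Concretely, on a connected piece of interface, if $\mu$ is a positive solution that is $W^{1,p}$ on each side, the transmission condition $\mu^+ S^+\g n = \mu^- S^-\g n$ $\cH^{d-1}$-a.e.\ relates the trace of $\mu^+$ to the trace of $\mu^-$ through the (a.e.\ invertible, by $|\det S|\geq c$) matrices $S^\pm$, so the one-sided normalization constant on one side determines the constant on the adjacent side.

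The main obstacle will be the gluing step: I must show that a single multiplicative constant propagates consistently across \emph{all} the interfaces and all the connected Sobolev pieces, so that two elements $\mu_1,\mu_2$ of $K_{\g u}$ are forced to be proportional globally and not merely piecewise. This requires a connectivity argument on the ``adjacency graph'' of the pieces together with the transmission relation being compatible (a cocycle-type consistency), and it is delicate precisely because $SBV^p$ allows the jump set to be an arbitrary rectifiable surface rather than the nicely-structured partition of $W^{1,p}_{\pw}$. I would handle this by first proving, using the interior analysis, that the ratio $\mu_2/\mu_1$ is locally constant on each Sobolev piece (its gradient vanishes a.e.\ there because both satisfy the same first-order transport relation), then using the normal-flux transmission condition to show $\mu_2/\mu_1$ has no jump across the interfaces, so $D(\mu_2/\mu_1)=\g 0$ as a measure on a set where the function is in $SBV$; connectedness of $\Omega$ then forces $\mu_2/\mu_1$ to be a global constant, giving $\dim K_{\g u}\leq 1$. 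The technical subtleties are the a.e.\ invertibility of the traces $S^\pm$ on the jump set and the justification that $\mu S\in L^2$ has well-defined one-sided normal traces along a merely rectifiable surface, which is where the $L^\infty\cap SBV^p$ hypotheses on $S$ together with $\mu\in L^2$ do the work.
```
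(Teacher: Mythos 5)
Your proposal follows essentially the same route as the paper's proof: decompose $\Omega$ minus the closure of the jump set of $S$ into connected pieces, apply Theorem \ref{theo:sobolev} on each piece to get a one-dimensional local solution space, extract the normal-flux transmission condition $\mu^+S^+\g n=\mu^-S^-\g n$ from the distributional divergence, and use it together with $|\det S^\pm|\geq c$ to show the ratio of two solutions cannot jump across interfaces, hence is globally constant by connectedness. The connectivity-of-the-adjacency-graph issue you flag as delicate is indeed present in the paper's argument as well (it is asserted, not proved, that non-constancy of $\mu'/\mu$ produces two \emph{adjacent} pieces with different ratios), so your proposal is faithful to the published proof including its implicit assumptions.
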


\begin{proof} Denote $\Sigma$ the closure of the discontinuity surface of $S$. The open set $\Omega\backslash\Sigma$ can be decomposed as a countable union of connected open sets:

\begin{equation}\nonumber
\Omega\backslash\Sigma=\bigcup_{i\in I}\Omega_i.
\end{equation}
One may apply Theorem \ref{theo:sobolev} on each subset and say that there exists some $\nu_i\in\cC^0(\Omega_i)$ such that any solution of the problem is written as
\begin{equation}\nonumber
\mu=\sum_{i\in I}\alpha_ie^{\nu_i}\1_{\Omega_i}\quad\text{ in }\Omega\backslash\Sigma,
\end{equation}
where $\alpha_i$'s are some real numbers. 

We show now that these numbers are linked by the jump condition over $\Sigma$. Consider two subdomain $\Omega_i$ and $\Omega_j$ in contact in the sense that their common boundary $$\Sigma_{ij}:=\partial\Omega_i\cap\partial\Omega_j$$ is of positive surface measure: $\cH^{d-1}(\partial\Omega_i\cap\partial\Omega_j)>0$. As $\Sigma$ is rectifiable, there exists $x_0\in \Sigma_{ij}$ and $B:=B(x_0,\e)$ such that $\Omega_i^B:=\Omega_i\cap B$ and $\Omega_j^B:=\Omega_j\cap B$ are Lipschitz domains. As $\mu$ and $S$ are $W^{1,p}$ in $\Omega_i^B$ and $\Omega_j^B$, so is the product $\mu S$ and it admits two-sided traces $\mu_i S_i$ and $\mu_j S_j$ defined as functions of $L^p(\Sigma_{ij}\cap B)$. From the variational formulation, the jump condition at $\Sigma_{ij}\cap B$ reads as

\begin{equation}\nonumber
\mu_i S_i\nu = \mu_j S_j\nu \quad \text{ almost everywhere  on }\Sigma_{ij}\cap B.
\end{equation}
This jump condition gives a vectorial equation linking $\alpha_i$ and $\alpha_j$ which is 

\begin{equation}\label{eq_jump}
\alpha_i e^{\nu_i}S_i\nu = \alpha_j e^{\nu_j}S_j\nu.
\end{equation}
As $\nu_i$, $\nu_j$ are bounded in $B$ and $|\det S_i|$, $|\det S_j|\geq c>0$, there exists $c'>0$ such that $|e^{\nu_i}S_i\nu|\geq c'$ and $|e^{\nu_j}S_j\nu|\geq c'$. A first consequence is that if one $\alpha_i=0$ then they are all zero and $\mu=0$.

Now consider another solution $\mu'=\sum_{i\in I}\beta_ie^{\nu_i}\1_{\Omega_i}$ and assume that $\frac{\mu'}{\mu}$ is not constant. There exist $\Omega_i,\ \Omega_j$ in contact such that ${\beta_i}/{\alpha_i}\neq {\beta_j}/{\alpha_j}$. Using  \eqref{eq_jump} for both couples $(\alpha_i,\alpha_j)$ and $(\beta_i,\beta_j)$,  it follows that there exists $\gamma\neq 0$ such that $\alpha_j=\gamma\alpha_i$ and $\beta_j=\gamma\beta_i$, which leads to ${\beta_i}/{\alpha_i}= {\beta_j}/{\alpha_j}$. Since this is absurd, $\mu'/\mu$ is constant.
\end{proof}

\paragraph{Proof of Theorem \ref{theo:sobolev}}
 As $|\det S|\geq c> 0$, there exists $S^{-1}\in L^\infty(\Omega,\R^{d\times d}_\sym)$ such that $SS^{-1}=I$ almost everywhere in $\Omega$. We successively apply  Propositions \ref{prop_dec}  and  \ref{prop_carac} with $\g b=-S^{-1} \nabla\cdot S$, which ends the proof. \qed

\begin{proposition}[Decomposition]\label{prop_dec} Assume that $S\in L^\infty(\Omega,\R^{d\times d}_\sym)\cap W^{1,p}(\Omega,\R^{d\times d})$ for some $p\in[2,+\infty]$ and there exists $S^{-1}\in L^\infty (\Omega,\R ^{d\times d})$ such that $SS^{-1}=I$ almost everywhere on $\Omega$. Then, any solution $\mu$ of $\nabla \cdot (\mu S)=0$ is in $W^{1,1}(\Omega)$ and satisfies

\begin{equation}\nonumber
\nabla\mu +\mu S^{-1}(\nabla\cdot S) = {\g 0}.
\end{equation}
\end{proposition}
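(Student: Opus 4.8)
The plan is to show that the distributional equation $\nabla\cdot(\mu S)=0$, combined with the $W^{1,p}$ regularity of $S$ and the boundedness of $S^{-1}$, forces $\mu$ itself to be a Sobolev function, and then to compute its gradient explicitly. I would first argue that $\mu\in L^\infty_{\loc}$ or at least that the product $\mu S$ is regular enough to differentiate. The key observation is the product/Leibniz structure: formally, $\nabla\cdot(\mu S)=S^T\nabla\mu+\mu\,\nabla\cdot S$, so that $\nabla\cdot(\mu S)=0$ reads $S^T\nabla\mu=-\mu\,\nabla\cdot S$. Since $S$ is symmetric and invertible with $S^{-1}\in L^\infty$, multiplying on the left by $(S^{-1})^T=S^{-1}$ yields the claimed identity $\nabla\mu+\mu S^{-1}(\nabla\cdot S)=\g 0$. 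The whole difficulty is to justify this computation rigorously, because a priori $\mu$ is only in $L^2$ and the Leibniz rule does not directly apply to a product of a merely $L^2$ function with a Sobolev function.

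The main technical step, which I expect to be the principal obstacle, is establishing that $\mu\in W^{1,1}(\Omega)$ in the first place. I would approach this by a bootstrap/mollification argument. Since $p>2$ (we are in the regime $p\in[2,+\infty]$) and $S\in W^{1,p}$, one has $\nabla\cdot S\in L^p$. Setting $\g b:=-S^{-1}(\nabla\cdot S)\in L^p(\Omega,\R^d)$, the desired conclusion is exactly $\nabla\mu=\mu\,\g b$. To make this precise I would regularize: let $\mu_\e=\mu*\rho_\e$ and $S_\e=S*\rho_\e$ be mollifications, use that $\nabla\cdot(\mu S)=0$ together with a commutator estimate (in the spirit of DiPerna–Lions theory for transport equations with Sobolev coefficients) to control the error terms $\nabla\cdot(\mu_\e S)-(\nabla\cdot(\mu S))_\e$, and pass to the limit. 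The DiPerna–Lions commutator lemma is the natural tool: it guarantees that the commutator $[\,\nabla\cdot(S\,\cdot\,),\rho_\e*\,]\mu$ tends to zero in $L^1_{\loc}$ precisely when $S$ has one Sobolev derivative and $\mu\in L^2$ with $\tfrac12+\tfrac1p\le 1$, which holds here since $p\ge 2$.

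Once the commutator argument gives that $S^T\nabla\mu$ exists as an $L^1$ function satisfying $S^T\nabla\mu=-\mu\,\nabla\cdot S$ in the sense of distributions, I would invert $S$: because $|\det S|\ge c>0$ and $S^{-1}\in L^\infty$, the relation $\nabla\mu=-\mu\,S^{-1}(\nabla\cdot S)$ follows pointwise almost everywhere, and the right-hand side lies in $L^1$ (product of $L^2$ and $L^p$ with $p\ge 2$ gives at least $L^1$), so $\mu\in W^{1,1}(\Omega)$ as claimed. An alternative, slightly softer route avoiding the full DiPerna–Lions machinery would be to test the weak equation $\int_\Omega \mu S:\nabla\g v=0$ against $\g v=S^{-1}\nabla\ph\,\eta$ for suitable test functions $\ph$ and cutoffs $\eta$, and integrate by parts to read off the weak gradient of $\mu$ directly; this exploits the specific algebraic form of the operator rather than generic transport theory, and may be cleaner given that the eventual target regularity is only $W^{1,1}$.
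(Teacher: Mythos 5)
Your proposal is correct, and your primary route is genuinely different from the paper's; your ``softer'' alternative at the end is essentially the paper's actual proof. The paper takes the weak formulation $\int_\Omega \mu\, S:\nabla \g v=0$, observes that $\nabla\cdot(S\g v)=(\nabla\cdot S)\cdot\g v+S:\nabla\g v$ for $\g v\in L^\infty\cap W^{1,p}$, and substitutes the test function $\g v=S^{-1}\g w$ with $\g w\in\cD(\Omega,\R^d)$ (legitimate because $S^{-1}\in W^{1,p}\cap L^\infty$, so $S^{-1}\g w$ is an admissible test function), so that $S\g v=\g w$ and the identity $\int_\Omega\mu\,\nabla\cdot\g w=\int_\Omega\mu\,(\nabla\cdot S)\cdot S^{-1}\g w$ falls out in one line; the right-hand side is in $L^1(\Omega)$ by H\"older since $p\ge 2$, whence $\mu\in W^{1,1}(\Omega)$. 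Your DiPerna--Lions commutator route also works: the exponent condition $\tfrac12+\tfrac1p\le1$ is exactly what $p\ge2$ provides, and after passing to the limit you recover $S\nabla\mu=-\mu\,\nabla\cdot S$ and multiply by $S^{-1}\in L^\infty$. But it is substantially heavier machinery for the same conclusion, and it needs a little care at the boundary (the mollification only lives on an inner domain, so you obtain the identity in $\cD'(\Omega)$ and then conclude global $W^{1,1}$ membership from the global integrability of $\mu S^{-1}(\nabla\cdot S)$). What the direct substitution buys is that no regularization or commutator estimate is needed at all: the algebraic identity $S(S^{-1}\g w)=\g w$ does all the work, which is precisely the advantage you anticipated. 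One small caution: your opening suggestion to first argue that $\mu\in L^\infty_{\loc}$ is not available at this stage --- that is the content of the subsequent regularity proposition, which is proved by bootstrapping \emph{from} the identity you are establishing here --- but fortunately neither of your two routes actually uses it.
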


\begin{proof} As $(\nabla\cdot S)\in L^p(\Omega,\R^d)$, we have for every $\g v\in L^\infty(\Omega,\R^d)\cap W^{1,p}(\Omega,\R^d)$ that $\nabla\cdot (S\g v)\in L^p(\Omega)$ and $\nabla\cdot (S\g v)=(\nabla\cdot S)\cdot\g v+S:\nabla \g v$. Then, we write

\begin{equation}\nonumber
\int_\Omega \mu \nabla\cdot (S\g v) =\int_\Omega \mu (\nabla\cdot S)\cdot\g v.
\end{equation}

For any $\g w\in\cD(\Omega,\R^d)$, the test function $\g v=S^{-1}\g w$ belongs to $L^\infty(\Omega,\R^d)$ and $\nabla \g v= \nabla S^{-1}\cdot\g w + S^{-1}\cdot \nabla  \g w\in L^p(\Omega,\R^{d\times d})$, and can be used in the previous equation to get,

\begin{equation}\nonumber
\int_\Omega \mu \nabla\cdot \g w =\int_\Omega \mu (\nabla\cdot S)\cdot S^{-1} \g w,\quad \forall\g w\in\cD(\Omega,\R^d).
\end{equation}
This means

\begin{equation}\nonumber
\nabla\mu = -\mu S^{-1}(\nabla\cdot S).
\end{equation}
\end{proof}

\begin{proposition}[Regularity]\label{prop_reg}  Take $p>d$ and consider $\g b\in L^p(\Omega,\R^d)$. Any solution of

\begin{equation}\nonumber\left\{\begin{aligned}
\mu &\in L^2(\Omega),\\
\nabla\mu &=\mu\g b,
\end{aligned}\right.\end{equation}
belongs to $\cC^0(\Omega)$.  If $\Omega$ is Lipschitz, then it belongs to $W^{1,p}(\Omega)$.
\end{proposition}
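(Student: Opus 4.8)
The plan is to bootstrap the integrability of $\mu$ by exploiting the first–order relation $\nabla\mu=\mu\g b$ together with the Sobolev embedding theorem, and then to invoke Morrey's inequality to reach continuity. The key mechanism is that each pass through the relation trades integrability of $\mu$ for a definite gain $\delta:=1/d-1/p>0$, which is strictly positive precisely because $p>d$, so the scheme terminates in finitely many steps. To start, set $q_0=2$. Since $\mu\in L^{q_0}(\Omega)$ and $\g b\in L^p(\Omega)$, Hölder's inequality gives $\mu\g b\in L^{r_0}(\Omega)$ with $1/r_0=1/q_0+1/p$; by hypothesis $\nabla\mu=\mu\g b\in L^{r_0}$, and since $r_0\le q_0$ on the bounded set $\Omega$ we also have $\mu\in L^{r_0}$, whence $\mu\in W^{1,r_0}_{\loc}(\Omega)$.

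The iteration proceeds as follows. As long as the current exponent $r$ for $\nabla\mu$ satisfies $r<d$, the Sobolev embedding on balls $B\Subset\Omega$ yields $\mu\in L^{q'}_{\loc}$ with $1/q'=1/r-1/d$; feeding this back into the relation produces $\nabla\mu\in L^{r'}_{\loc}$ with $1/r'=1/q'+1/p=1/r-\delta$. Thus the reciprocal exponent of $\nabla\mu$ drops by exactly $\delta$ at each pass, so after finitely many steps $r$ exceeds $d$. (Should some iterate land exactly on $r=d$, one uses that $W^{1,d}$ embeds into every $L^q$ with $q<\infty$ and performs one further step to overshoot $d$, so the borderline case causes no trouble.) Once $\mu\in W^{1,r}_{\loc}(\Omega)$ with $r>d$, Morrey's inequality gives $\mu\in\cC^{0,1-d/r}_{\loc}(\Omega)$, hence $\mu\in\cC^0(\Omega)$ and $\mu$ is locally bounded; consequently $\mu\g b\in L^p_{\loc}(\Omega)$, so $\nabla\mu\in L^p_{\loc}(\Omega)$ and $\mu\in W^{1,p}_{\loc}(\Omega)$.

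For the global statement, suppose $\Omega$ is Lipschitz, so that it is a Sobolev extension domain and the embeddings $W^{1,r}(\Omega)\hookrightarrow L^{r^*}(\Omega)$ hold with global norms. Running exactly the same iteration with global exponents then yields $\mu\in L^\infty(\Omega)$. At that point $\mu\g b\in L^p(\Omega)$ globally, so $\nabla\mu\in L^p(\Omega)$; since $\mu\in L^\infty(\Omega)\subset L^p(\Omega)$ on the bounded set $\Omega$, we conclude $\mu\in W^{1,p}(\Omega)$.

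The main obstacle is the clean separation of interior and global regularity. The continuity assertion is purely local and is obtained on balls $B\Subset\Omega$ with no hypothesis on $\partial\Omega$, whereas upgrading to $\mu\in W^{1,p}(\Omega)$ relies on the global Sobolev embedding and therefore on the Lipschitz hypothesis, which supplies an extension operator; this is exactly the source of the two-tiered conclusion. A secondary point requiring care is the bookkeeping of exponents: at each step one must verify that $\mu$ itself lies in $L^{r}$ (so that $\mu\in W^{1,r}$, not merely $\nabla\mu\in L^r$), which holds because $r<q$ at every stage on the bounded domain, and one must correctly handle the borderline exponent $r=d$ in the embedding as indicated above.
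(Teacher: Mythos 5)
Your proposal is correct and takes essentially the same route as the paper: both bootstrap the regularity of $\mu$ through the relation $\nabla\mu=\mu\g b$, gaining $1/d-1/p>0$ in the reciprocal exponent at each pass of the Sobolev embedding until the exponent exceeds $d$, then conclude continuity (and $W^{1,p}$) from $W^{1,q}\hookrightarrow L^\infty$ for $q>d$, with the global statement obtained by rerunning the argument on the Lipschitz domain $\Omega$ instead of on balls. The only cosmetic difference is that the paper packages the iteration as a contradiction with the supremum $q^*=\sup\{q:\mu\in W^{1,q}(B)\}$ (which silently absorbs the borderline case $r=d$ that you treat explicitly), whereas you run the finite induction directly.
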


\begin{proof} Consider a ball $B\subset\Omega$. As $p>d$, the injection $W^{1,p}(B)\hookrightarrow\cC^0(B)$ holds. Let us prove that $\mu\in W^{1,p}(B)$. First, note that as $p\geq 2$, $\nabla\mu=\mu\g b\in L^1(B,\R^d)$ and so $\mu\in W^{1,1}(B)$. Call now $q^*=\sup\{q\geq 1,\ \mu\in W^{1,q}(B)\}$. 

Suppose that $q^*\leq d$. For any $1\leq q<q^*$, $\mu\in W^{1,q}(B)\hookrightarrow L^r(B)$ with $\frac 1r=\frac 1q-\frac1d$ and $\mu\g b$ belongs to $L^s(B,\R^d)$ with $\frac 1s=\frac1r+\frac1p=\frac 1q-\frac1d+\frac1p$. Let $\beta=\frac1d-\frac1p>0$. We get that $\mu\in W^{1,s}(B)$ with $\frac1s = \frac1q-\beta$. One can choose $q$ such that $s>q^*$ which contradicts the definition of $q^*$. Then $q^*>d$.

Considering that $\mu\in W^{1,q}(B)$ for some $q\in(d,q^*]$ and that for such $q$, $W^{1,q}(B)\hookrightarrow L^\infty(B)$, we get that $\mu\g b\in L^p(B,\R^d)$ and so $\mu\in W^{1,p}(B)$. As a consequence, $\mu$ is continuous in $\Omega$.

If $\Omega$ is Lipschitz, one can restart the proof replacing $B$ by $\Omega$ to obtain that $\mu\in W^{1,p}(\Omega)$.

\end{proof}

\begin{proposition}[Existence of non-zero solutions]\label{prop_carac}  Take $p>d$ and consider $\g b\in L^p(\Omega,\R^d)$. The problem 

\begin{equation}\left\{\begin{aligned}\nonumber
&\mu\in L^2(\Omega),\\
&\nabla\mu=\mu\g b \quad \mbox{in }  \cD'(\Omega,\R^d),
\end{aligned}\right.\end{equation} 
admits a non-zero solution if and only if the vector field $\g b$ is conservative which means that

\begin{equation}\nonumber
\nabla\nu=\g b \quad  \mbox{in }  \cD'(\Omega,\R^d),
\end{equation}
 admits a continuous solution. In this case, the set of solutions is given by $\{\alpha e^\nu,\ \alpha\in\R\}$. Moreover, if $\Omega$ is Lipschitz, then $\nu$ is bounded and there exists a constant $m>0$ such that $\mu_0:=e^\nu\geq m$.
\end{proposition}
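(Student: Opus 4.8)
The plan is to prove both implications, the explicit description of the solution set, and the lower bound, with Proposition~\ref{prop_reg} doing the regularity work. The genuinely delicate point is a unique-continuation fact: a continuous solution of $\nabla\mu=\mu\g b$ either vanishes identically or is nowhere zero. Everything else reduces to a chain-rule computation on the connected domain $\Omega$.

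\emph{Converse direction.} Assume $\g b=\nabla\nu$ with $\nu$ continuous. Since $\nabla\nu=\g b\in L^p(\Omega,\R^d)$ with $p>d$, one has $\nu\in W^{1,p}_{\loc}(\Omega)$; set $\mu:=e^\nu$. On every ball $B\subset\subset\Omega$, $\nu$ is bounded by continuity, so $t\mapsto e^t$ is Lipschitz on the range of $\nu$ and the chain rule for Sobolev functions gives $\mu\in W^{1,p}(B)$ with $\nabla\mu=e^\nu\nabla\nu=\mu\g b$. Thus every $\alpha e^\nu$ solves the equation in $\cD'(\Omega,\R^d)$. When $\Omega$ is Lipschitz, $\nu\in W^{1,p}(\Omega)\hookrightarrow\cC^0(\overline\Omega)$ is bounded, so $\mu=e^\nu\in L^\infty(\Omega)\subset L^2(\Omega)$.

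\emph{Forward direction --- non-vanishing.} Let $\mu\not\equiv0$ be a solution. By Proposition~\ref{prop_reg}, $\mu\in\cC^0(\Omega)\cap W^{1,p}_{\loc}(\Omega)$. I would show the zero set $Z:=\{\mu=0\}$ is open; being also closed and $\Omega$ connected, this forces $Z=\emptyset$. Fix $x_0\in Z$ and a ball $B=B(x_0,r)\subset\subset\Omega$. For $\e>0$ introduce $h_\e:=\log(\mu^2+\e)\in W^{1,p}(B)$; the chain rule yields $\nabla h_\e=\dfrac{2\mu\,\nabla\mu}{\mu^2+\e}=\dfrac{2\mu^2}{\mu^2+\e}\,\g b$, hence $|\nabla h_\e|\le 2|\g b|$ pointwise, uniformly in $\e$. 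Since $p>d$, Morrey's embedding gives $|h_\e(x)-h_\e(y)|\le C\,\norm{\g b}{L^p(B)}\,|x-y|^{1-d/p}$ with $C$ independent of $\e$. Evaluating at $x_0$, where $h_\e(x_0)=\log\e$, and at any $y\in B$, then letting $\e\to0$, forces $\mu(y)^2\le0$, so $\mu\equiv0$ on $B$. This is the step I expect to be the main obstacle, and the uniform-in-$\e$ Morrey estimate is its crux.

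\emph{Conclusion and solution set.} Now $\mu$ is continuous and nowhere zero on the connected set $\Omega$, hence of constant sign; up to replacing $\mu$ by $-\mu$, assume $\mu>0$. Then $\nu:=\log\mu$ is continuous, and since $\mu$ is bounded below by a positive constant on each ball, $\nu\in W^{1,p}_{\loc}(\Omega)$ with $\nabla\nu=\nabla\mu/\mu=\g b$, so $\g b$ is conservative. If $\tilde\mu$ is another nonzero solution, it too has constant sign and $\nabla(\log|\tilde\mu|-\nu)=\g b-\g b=\g 0$ on the connected domain, so $\log|\tilde\mu|-\nu$ is constant and $\tilde\mu=\alpha e^\nu$ for some $\alpha\in\R$; this gives the description $\{\alpha e^\nu:\alpha\in\R\}$. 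Finally, when $\Omega$ is Lipschitz, Proposition~\ref{prop_reg} upgrades $\nu$ to $W^{1,p}(\Omega)\hookrightarrow\cC^0(\overline\Omega)$, so $\nu$ is bounded and $\mu_0:=e^\nu\ge e^{\min_{\overline\Omega}\nu}=:m>0$.
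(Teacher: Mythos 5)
Your proof is correct, but the central step --- showing that a nonzero solution cannot vanish anywhere --- is handled by a genuinely different mechanism than in the paper. The paper argues by contradiction: if $\mu\not\equiv 0$ vanishes somewhere, it selects a ball $B$ on which $\mu>0$ while $\mu$ vanishes somewhere on $\partial B$, sets $\nu:=\ln\mu$ on $B$, notes that $\nabla\nu=\g b\in L^p(B)$ gives $\nu\in W^{1,p}(B)\hookrightarrow L^\infty(B)$, and concludes $\mu=e^\nu\geq e^{-\norm{\nu}{L^\infty(B)}}>0$ on $B$, contradicting the vanishing on $\partial B$. You instead show that the zero set is open via the regularized logarithm $h_\e=\log(\mu^2+\e)$, whose gradient is bounded by $2|\g b|$ uniformly in $\e$, so the Morrey--H\"older estimate combined with $h_\e(x_0)=\log\e\to-\infty$ forces $\mu\equiv 0$ near any zero; connectedness then finishes. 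Both arguments ultimately rest on Morrey's embedding applied to a primitive of (a multiple of) $\g b$, but yours buys two technical simplifications: it avoids constructing the ball of positivity whose closure touches the zero set (which needs a little care near $\partial\Omega$), and it never has to interpret $\ln\mu$ as a Sobolev function on a ball where $\mu$ degenerates at the boundary, since $h_\e$ is globally well defined and bounded. Conversely, the paper's route is shorter and obtains the solution-set description in one stroke by computing $\nabla(\mu e^{-\nu})=0$ for an arbitrary $L^2$ solution, without first knowing that $\mu$ is nowhere zero, whereas you need the non-vanishing before writing $\log|\tilde\mu|$. Two cosmetic remarks: your appeal to Proposition~\ref{prop_reg} to place $\nu$ in $W^{1,p}(\Omega)$ is a slight misattribution (that proposition concerns $\nabla\mu=\mu\g b$, not $\nabla\nu=\g b$; what you actually need is the standard fact that a locally integrable function with gradient in $L^p$ on a Lipschitz domain lies in $W^{1,p}(\Omega)$, or you may apply the proposition to $e^{\pm\nu}$), and in the converse direction the membership $e^\nu\in L^2(\Omega)$ is only justified when $\Omega$ is Lipschitz --- but the paper's own proof glosses over exactly the same point.
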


\begin{proof} If the equation $\nabla\nu=\g b$ admits a solution $\nu\in \cC^0(\Omega)$, then $e^\nu$ is continuous and positive in $\Omega$. It satisfies in the weak sense $\nabla(e^\nu)=e^\nu\g b$. Its inverse $e^{-\nu}$ has the same properties. Take $\mu\in L^2(\Omega)$ a solution of $\nabla\mu=\mu\g b$ and define $\alpha = \mu e^{-\nu}\in L^2_\loc(\Omega)$, then in the weak sense, $\nabla\alpha = e^{-\nu}\nabla\mu-\mu e^{-\nu}\g b = 0$, so, as $\Omega$ is connected, $\alpha$ is constant in $\Omega$. This proves the first part of the result.

Assume now that $\nabla\nu=\g b$ has no solution in $\cC^0(\Omega)$ and consider a solution $\mu\in L^2(\Omega)$ of the equation $\nabla\mu=\mu\g b$. Using Proposition \ref{prop_reg}, it follows that $\mu\in \cC^0(\Omega)$.

Suppose that $\mu$ does not vanish  in $\Omega$, then $\mu>0$ in $\Omega$ (take $-\mu$ if $\mu<0$), then $\nu:=\ln \mu$ is continuous and satisfies $\nabla\nu = \g b$, which is impossible.

As a consequence, $\mu$ does vanish somewhere in $\Omega$. If $\mu\neq 0$, then there exists a ball $B\subset\Omega$ such that $\mu>0$ (take $-\mu$ if $\mu<0$) in $B$ and $\mu$ vanishes somewhere on $\partial B$. Inside $B$, $\nu:=\ln\mu$ is continuous and satisfies $\nabla\nu=\g b\in L^p(B,\R^d)$ so $\nu\in W^{1,p}(B)\hookrightarrow L^\infty(B)$.  Thus,  $\mu=e^\nu\geq e^{-\norm{\nu}{L^\infty(B)}}>0$ on $B$, which  contradicts the fact that $\mu$ vanishes somewhere on $\partial B$. Finally, it follows that $\mu=0$ is the only solution. \end{proof}

\subsection{Closed range property of the shear-to-force operator}
\label{sec:closedrange}

In the case of existence of a non-trivial null space for the shear-to-force operator (elastostatic case), we study the possibility of a stable reconstruction of the parameter $\mu$ in $L^2(\Omega)$. We recall that a linear operator $A:H\rightarrow E$ where $H$ is an Hilbert space and $E$ a Banach space has closed range if $R(A):=A(H)$ is closed in $E$. The following proposition gives an equivalent definition of this property:

\begin{proposition}[Closed range operator] Let $H$ and $E$ be en Hilbert and a Banach space. A linear operator $A:H\rightarrow E$ has closed range if and only if there exists a constant $c>0$ such that 

\begin{equation}\nonumber
\forall x\in N(A)^\perp,\quad \norm{x}{H}\leq c\norm{Ax}{E}.
\end{equation}
\end{proposition}
In particular, this implies the existence of a bounded inverse operator from $R(A)$ to $N(A)^\perp$. We refer the reader to \cite[ Section 2.7]{brezis2010functional} for more details.

\begin{theorem}[Closed range with strain in $W^{1,p}$]\label{theo:closedrangew1p}
Take $\g u$ such that $S:=\nabla^s \g u \in W^{1,p}(\Omega)$, and that $\vert \text{det}\ \nabla^s \g u\vert >m>0$ in $\Omega$. If $N\left(A^{\g I}_{\g u}\right)\neq \{0\}$, then $A^{\g I}_{\g u}: L^2(\Omega) \to H^{-1}(\Omega,\R^d)$ has closed range.

\end{theorem}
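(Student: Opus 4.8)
The plan is to verify the equivalent characterization from the preceding proposition: I must produce a constant $c>0$ such that $\norm{\mu}{L^2(\Omega)}\leq c\,\norm{A^{\g I}_{\g u}\mu}{H^{-1}}$ for all $\mu\in N(A^{\g I}_{\g u})^\perp$. Since $N(A^{\g I}_{\g u})\neq\{0\}$, Theorem \ref{theo:sobolev} tells us the null space is exactly $\Span\{\mu_0\}$ for a positive continuous $\mu_0\in W^{1,p}(\Omega)$, bounded away from zero on $\Omega$ (when $\Omega$ is Lipschitz). The natural first step is the \emph{logarithmic change of unknown}: write $\mu = \mu_0\,\rho$, so that $\rho\in L^2(\Omega)$ and the kernel corresponds to $\rho$ constant. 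Using $\nabla\cdot(\mu_0 S)=0$ from Proposition \ref{prop_dec}, a direct computation in the weak sense gives $\nabla\cdot(\mu S)=\nabla\cdot(\mu_0\rho\,S)=\mu_0 S\nabla\rho$ (the zeroth-order terms cancel against the kernel equation). Thus $A^{\g I}_{\g u}\mu = -\mu_0 S\nabla\rho$, and the problem reduces to controlling $\norm{\rho}{L^2}$ (modulo constants) by $\norm{\mu_0 S\nabla\rho}{H^{-1}}$.

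Next I would exploit the invertibility hypothesis $|\det S|>m>0$: since $S^{-1}\in L^\infty$ and $\mu_0$ is bounded above and below, the matrix field $B:=\mu_0 S$ is uniformly elliptic in the sense that $B^{-1}\in L^\infty$ with $|B\xi|\geq c'|\xi|$ pointwise. So $\norm{\mu_0 S\nabla\rho}{H^{-1}}$ is comparable, up to the $L^\infty$ and $W^{1,p}$ bounds on $B$ (here $p>d$ is used so that $B$ is a multiplier on $H^{-1}$), to $\norm{\nabla\rho}{H^{-1}}$. The crux is then a quantitative estimate of the form
\begin{equation}\nonumber
\inf_{a\in\R}\norm{\rho-a}{L^2(\Omega)}\leq C\,\norm{\nabla\rho}{H^{-1}(\Omega)},
\end{equation}
which is a Ne\v{c}as-type (or generalized Poincaré/Ne\v{c}as--Lions) inequality: on a bounded connected Lipschitz domain, an $L^2$ function whose distributional gradient lies in $H^{-1}$ is controlled, modulo constants, by that gradient. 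This inequality is the heart of the argument.

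The main obstacle I expect is precisely this Ne\v{c}as-type bound, together with the care needed to pass through the multiplication by the matrix field $B$ in the negative Sobolev space $H^{-1}$. Concretely, two points require attention: first, that the reduction $A^{\g I}_{\g u}\mu=-B\nabla\rho$ is valid in the weak (distributional) sense, which follows from Proposition \ref{prop_dec} but must be justified for $\rho$ merely $L^2$ by a density argument; second, that multiplication by $B=\mu_0 S$ (and by $B^{-1}$) maps $H^{-1}\to H^{-1}$ boundedly with a two-sided estimate, for which the regularity $B\in W^{1,p}$ with $p>d$ is exactly what is needed (so that $B$ and $B^{-1}$ are Lipschitz, hence $H^{-1}$-multipliers). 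Granting the Ne\v{c}as inequality, one chains the estimates: $\norm{\nabla\rho}{H^{-1}}\leq C\norm{B\nabla\rho}{H^{-1}}=C\norm{A^{\g I}_{\g u}\mu}{H^{-1}}$, then the Ne\v{c}as inequality bounds $\inf_a\norm{\rho-a}{L^2}$, and finally multiplying back by $\mu_0$ and recalling that $\mu\perp\mu_0$ fixes the representative (removing the free constant) yields $\norm{\mu}{L^2}\leq c\norm{A^{\g I}_{\g u}\mu}{H^{-1}}$. This establishes the closed range property.
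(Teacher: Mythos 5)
Your proposal is correct and follows essentially the same route as the paper: the change of unknown $\alpha=\mu/\mu_0$, the reduction $A^{\g I}_{\g u}\mu=-\mu_0 S\nabla\alpha$ via $\nabla\cdot(\mu_0 S)=0$, the $H^{-1}$-multiplier property of $\mu_0^{-1}S^{-1}\in W^{1,p}$ with $p>d$ (Lemma \ref{lem:w1p}), and the Ne\v{c}as-type estimate $\norm{\alpha}{L^2(\Omega)}\leq c\norm{\nabla\alpha}{H^{-1}(\Omega)}$ under the orthogonality constraint (Lemma \ref{lem_cr}). The only cosmetic difference is that you quotient by constants and then fix the representative using $\mu\perp\mu_0$, whereas the paper applies the Ne\v{c}as lemma directly with the weight $\mu_0^2$; these are equivalent.
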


\begin{proof}
According to Theorem \ref{theo:sobolev}, there exists $\mu_0 \in W^{1,p}(\Omega)$ such that $N\left(A^{\g I}_{\g u}\right) = \Span\ \{\mu_0\}$. By construction of $\mu_0$, there exists a constant $\tilde{m}\geq 0$ such that $\mu_0>\tilde{m}$ in $\Omega$. Take $\g f \in R(A^{\g I}_{\g u})$ and $\mu\in \left\{\mu_0\right\}^\perp$ such that $A^{\g I}_{\g u}(\mu)=\g f$. Define $\alpha=\frac{\mu}{\mu_0}\in L^2(\Omega)$. One can write
\begin{align*}
-\nabla \cdot (\alpha \mu_0 S) = &\g f \quad \tin H^{-1}(\Omega,\R^d), \\
-\mu_0 S\nabla \alpha = &\g f \quad \tin H^{-1}(\Omega,\R^d),
\end{align*}
which makes sense because $\mu_0 S \in W^{1,p}(\Omega)$ (see Lemma \ref{lem:w1p}) and $\nabla\cdot (\mu_0 S)=0$. Multiplying by $\mu_0^{-1}S^{-1} \in W^{1,p}$ yields
\begin{align*}
 \nabla \alpha =\frac{S^{-1}}{\mu_0} \g f \quad \tin H^{-1}(\Omega,\R^d) .
\end{align*}
Using Lemma \ref{lem:w1p}, it follows that
\begin{align*}
\left\Vert\nabla \alpha\right\Vert_{H^{-1}(\Omega)} \leq\norm{\frac{S^{-1}}{\mu_0}}{W^{1,p}(\Omega)} \Vert\g  f\Vert_{H^{-1}(\Omega)}.
 \end{align*}
Since $\mu=\alpha\mu_0 \in \{\mu_0\}^\perp$, we have $\int_\Omega \alpha \mu_0^2 = 0$ and by Lemma \ref{lem_cr}, there exists $c>0$ such that 

\begin{align*}
\norm{\alpha}{L^2(\Omega)} &\leq c \norm{\nabla \alpha}{H^{-1}(\Omega)} \\
&\leq c \norm{\frac{S^{-1}}{\mu_0}}{W^{1,p}(\Omega)}\norm{\g f}{H^{-1}(\Omega)}.
\end{align*}
Turning now  to $\mu=\alpha \mu_0$, we obtain that
\begin{align*}
\norm{\mu}{L^2(\Omega)} &\leq \norm{\alpha}{L^2(\Omega)} \norm{\mu_0}{L^\infty(\Omega)} \\ &\leq c\norm{\mu_0}{L^\infty(\Omega)}\norm{\frac{S^{-1}}{\mu_0}}{W^{1,p}(\Omega)} \norm{\g f}{H^{-1}(\Omega)},\\
&\leq c\norm{\mu_0}{L^\infty(\Omega)}\norm{\frac{S^{-1}}{\mu_0}}{W^{1,p}(\Omega)} \norm{\nabla \cdot (\mu S )}{H^{-1}(\Omega)}.
\end{align*}

\end{proof}

\begin{theorem}[Closed range with strain in $W_\pw^{1,p}$]\label{theo:closedrangepw}
Take $\g u$ such that $S:=\nabla^s \g u \in W_\pw^{1,p}(\Omega,\R^{d\times d})$, and that $| \text{det}\ \nabla^s \g u| \geq m>0$ in $\Omega$. If $ N\left(A^{\g I}_{\g u}\right)\neq \{0\}$, then $A^{\g I}_{\g u}: L^2(\Omega) \to H^{-1}(\Omega,\R^d)$ has closed range.

\end{theorem}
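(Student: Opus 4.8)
Before turning the page, here is how I would attack Theorem \ref{theo:closedrangepw}.

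The plan is to mirror the proof of Theorem \ref{theo:closedrangew1p} and isolate the discontinuity surface of $S$ as the only genuinely new difficulty. By the closed range characterization recalled above, it suffices to produce a constant $c>0$ with $\norm{\mu}{L^2(\Omega)}\leq c\,\norm{\nabla\cdot(\mu S)}{H^{-1}(\Omega)}$ for every $\mu\in\{\mu_0\}^\perp$, where Theorem \ref{theo:sbv} applies (since $W^{1,p}_{\pw}(\Omega)\subset SBV^p(\Omega)$) and gives $N(A^{\g I}_{\g u})=\Span\{\mu_0\}$. Applying Theorem \ref{theo:sobolev} on each of the finitely many smooth pieces $\Omega_i$ of Definition \ref{de:A} shows that $\mu_0$ is piecewise $W^{1,p}$, hence bounded and, the cover being finite, bounded below by a positive constant. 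I would then set $T:=\mu_0 S$, $\alpha:=\mu/\mu_0\in L^2(\Omega)$, and $\g f:=-\nabla\cdot(\mu S)=-\nabla\cdot(\alpha T)$, reducing the problem to estimating $\norm{\alpha}{L^2}$ by $\norm{\g f}{H^{-1}}$.

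On the interior of each piece $\Omega_i$ the situation is exactly that of the global case: $T$ and $T^{-1}=\mu_0^{-1}S^{-1}$ are genuinely $W^{1,p}(\Omega_i)$, and the computation of Proposition \ref{prop_dec} gives the bulk relation $T\nabla\alpha=-\g f$, so that Lemma \ref{lem:w1p} applied on $\Omega_i$ yields a local bound of the form $\norm{\nabla\alpha}{H^{-1}(\Omega_i)}\leq C\,\norm{\g f}{H^{-1}(\Omega_i)}$. The new structural ingredient is the coupling across the interface $\Sigma$: the field $T=\mu_0 S$ has \emph{continuous normal trace} $T_i\nu=T_j\nu$, which is precisely the jump condition \eqref{eq_jump} used to glue the null space in Theorem \ref{theo:sbv}. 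Consequently $T$ is divergence free in the full distributional sense on $\Omega$, with no singular part carried by $\Sigma$.

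The main obstacle is assembling these local estimates into a single global inequality $\norm{\nabla\alpha}{H^{-1}(\Omega)}\leq c\,\norm{\g f}{H^{-1}(\Omega)}$. This is delicate for two reasons: negative Sobolev norms do not localize for free, and multiplication by the discontinuous matrix $T^{-1}$ is \emph{not} bounded on $H^{-1}(\Omega)$, so Lemma \ref{lem:w1p} cannot be invoked globally as it was in Theorem \ref{theo:closedrangew1p}. I expect this to be the technical heart of the argument. My plan to overcome it is to localize with a partition of unity subordinate to $\{\Omega_i\}$ together with a collar neighborhood of $\Sigma$, and to check that on the collar the only contributions threatening to produce a surface (Dirac-type) term are the normal components $T\nu$, which are continuous by \eqref{eq_jump} and therefore cancel pairwise across the interface. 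Equivalently, one proves a piecewise variant of Lemma \ref{lem:w1p} valid for $T^{-1}$ acting specifically on data of the form $\nabla\cdot(\alpha T)$, the normal-trace continuity being exactly what annihilates the dangerous tangential-derivative terms at $\Sigma$.

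Granting this estimate, the conclusion is identical to the global case. Since $\mu=\alpha\mu_0\in\{\mu_0\}^\perp$ we have $\int_\Omega\alpha\mu_0^2=0$, so the weighted Poincaré inequality of Lemma \ref{lem_cr} gives $\norm{\alpha}{L^2(\Omega)}\leq c\,\norm{\nabla\alpha}{H^{-1}(\Omega)}$, and finally
\begin{equation}\nonumber
\norm{\mu}{L^2(\Omega)}\leq\norm{\mu_0}{L^\infty(\Omega)}\,\norm{\alpha}{L^2(\Omega)}\leq c\,\norm{\mu_0}{L^\infty(\Omega)}\,\norm{\g f}{H^{-1}(\Omega)},
\end{equation}
which is the desired coercivity estimate on $N(A^{\g I}_{\g u})^\perp$, hence the closed range property.
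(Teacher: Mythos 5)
Your setup is sound and matches the paper up to the point you yourself flag as ``the technical heart'': Theorem \ref{theo:sbv} for the one-dimensional null space, the piecewise lower bound on $\mu_0$, the reduction to $\alpha=\mu/\mu_0$, the local estimates $\norm{\nabla\alpha_i}{H^{-1}(\Omega_i)}\leq C\norm{\g f}{H^{-1}(\Omega_i)}$ on each smooth piece, and the final appeal to Lemma \ref{lem_cr} plus the orthogonality $\int_\Omega\alpha\mu_0^2=0$. The gap is in the gluing step. You propose to prove a \emph{global} bound $\norm{D\alpha}{H^{-1}(\Omega)}\leq c\norm{\g f}{H^{-1}(\Omega)}$ (with $D\alpha$ the full distributional gradient, which is what Lemma \ref{lem_cr} needs globally) via a partition of unity and a collar of $\Sigma$, arguing that continuity of the normal trace $T\nu$ kills the interface terms. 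When you try to implement this, two obstructions appear. First, $\alpha$ is only in $L^2$ on each side with $\nabla\alpha_i\in H^{-1}(\Omega_i)$, so it has no trace on $\Sigma$ and the putative surface part $[\alpha]\nu\,\cH^{d-1}_\Sigma$ of $D\alpha$ is not directly accessible; likewise $\alpha T\in L^2$ with divergence only in $H^{-1}$ has no normal trace, so you cannot integrate by parts on the collar and invoke \eqref{eq_jump}. Second, the ``piecewise variant of Lemma \ref{lem:w1p}'' you invoke amounts to using test functions of the form $T^{-1}\g w$ (or $\mu_0^{-1}S^{-1}\g w$) near $\Sigma$; these are discontinuous across $\Sigma$, hence not in $H^1_0(\Omega,\R^d)$, and continuity of $T\nu$ does not make $T^{-1}\g w$ continuous for generic $\g w$. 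The global gradient estimate you want is in fact \emph{true}, but the only proof I can see of it is a posteriori, as a consequence of the theorem itself; as a stepping stone it is not established by your sketch.

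The paper resolves the interface coupling by a different, more concrete mechanism that never needs a trace of $\alpha$ on $\Sigma$. It applies Lemma \ref{lem_cr} \emph{locally} on each $\Omega_i$ to control the fluctuation $\norm{\alpha_i-\alpha_{\Omega_i}}{L^2(\Omega_i)}$ about the mean value $\alpha_{\Omega_i}$, and then treats the finitely many real numbers $\alpha_{\Omega_i}$ separately: writing $\int_{\Omega_i}\mu_0S:\nabla\g v=-\int_{\partial\Omega_i}\mu_0S\g n_i\cdot\g v$ (legitimate because $\mu_0S$ is divergence free and piecewise $W^{1,p}$, so its normal trace lies in $H^{1/2}(\Gamma_{ij})$ and is continuous across each interface), it tests the weak formulation against $\g v_{ij}=R_{ij}\bigl((\alpha_{\Omega_i}-\alpha_{\Omega_j})\mu_0S\g n_i\bigr)$, where $R_{ij}$ is a bounded extension $H^{1/2}(\Gamma_{ij})\to H^1_0(\Omega,\R^d)$. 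This yields $|\alpha_{\Omega_i}-\alpha_{\Omega_j}|\leq C\norm{\g f}{H^{-1}(\Omega)}$ for adjacent pieces, and the remaining free additive constant is fixed by $\int_\Omega\alpha\mu_0^2=0$. So the place where the continuous normal trace enters is in building \emph{admissible} $H^1_0$ test functions out of it, not in cancelling surface terms of $D\alpha$. To complete your proof you would need to replace your collar argument by this (or an equivalent) control of the piecewise means.
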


\begin{proof} According to Theorem \ref{theo:sbv}, there exists $\mu_0 \in L^2(\Omega)$ and $\norm{\mu_0}{L^2(\Omega)}=1$ and such that $N(A^{\g I}_{\g u}) = \text{span}\ \{\mu_0\}$. By construction, there exists $m>0$ such that $|\mu_0|\geq \tilde m$. Take $\g f\in R(A^{\g I}_{\g u})$ and $\mu\in L^2(\Omega)$ such that $A^{\g I}_{\g u}\mu=\g f$. Define $\alpha=\frac{\mu}{\mu_0}\in L^2(\Omega)$. As $S\in W_\pw^{1,p}(\Omega,\R^{d\times d})$, there exists a domain decomposition $\Omega_1,\dots,\Omega_k$. Note that, for $\alpha_i$ and $\g f_i$ the restrictions to $\Omega_i$, we have for any $i$,

\begin{equation}\nonumber
A^{\g I}_{\g u}(\alpha_i\mu_0) = \g f_i \quad\text{ in } H^{-1}(\Omega_i).
\end{equation}
Following the proof of the previous theorem, we can control $\nabla\alpha_i$ in $H^{-1}$ norm. There exists $c_i>0$ such that 

\begin{equation}\nonumber
\norm{\nabla \alpha_i}{H^{-1}(\Omega_i)}\leq c_i\norm{\g f}{H^{-1}(\Omega_i)}.
\end{equation}
Denote $\alpha_{\Omega_i}:=|\Omega_i|^{-1}\int_{\Omega_i}\alpha_i$. From Lemma \ref{lem_cr}, it follows that

\begin{equation}\nonumber
\norm{\alpha_i-\alpha_{\Omega_i}}{L^2(\Omega_i)}\leq c_i\norm{\g f}{H^{-1}(\Omega_i)}.
\end{equation}
Then, taking $C_1=\max\{c_i\}$ gives

\begin{equation}\label{eq:1}
\norm{\alpha_i-\alpha_{\Omega_i}}{L^2(\Omega_i)}\leq C_1\norm{\g f}{H^{-1}(\Omega)}\quad\forall i=1\dots k.
\end{equation}
Consider now the following decomposition:

\begin{equation}\label{eq:1.1}\begin{aligned}
\norm{\alpha}{L^2(\Omega)}^2=\sum_{i=1}^k\norm{\alpha_i}{L^2(\Omega_i)}^2\leq \sum_{i=1}^k\norm{\alpha_i-\alpha_{\Omega_i}}{L^2(\Omega_i)}^2+|\Omega_i|\alpha_{\Omega_i}^2.
\end{aligned}\end{equation}
It remains to prove that the mean values $\alpha_{\Omega_i}$ are controlled by $\g f$. Going back to the variational formulation,

\begin{equation}\nonumber
\int_\Omega\alpha\mu_0S:\nabla\g v=\left<\g f,\g v\right>_{H^{-1},H^1_0},
\end{equation}
we decompose it as follows:

\begin{equation}\nonumber\begin{aligned}
\sum_{i=1}^k\int_{\Omega_i}\alpha_i\mu_0S:\nabla\g v &=\left<\g f,\g v\right>_{H^{-1},H^1_0},\\
\sum_{i=1}^k\int_{\Omega_i}(\alpha_i-\alpha_{\Omega_i})\mu_0S:\nabla\g v + \alpha_{\Omega_i}\int_{\Omega_i}\mu_0S:\nabla\g v &=\left<\g f,\g v\right>_{H^{-1},H^1_0}.
\end{aligned}\end{equation}
Recalling that $\mu_0S$ is divergence free, we write $\int_{\Omega_i}\mu_0S:\nabla\g v=-\int_{\partial\Omega_i}\mu_0S\g n_i\cdot \g v$ to get that

\begin{equation}\nonumber
\sum_{i=1}^k\alpha_{\Omega_i}\int_{\partial\Omega_i}\mu_0S\g n_i\cdot \g v = \sum_{i=1}^k\int_{\Omega_i}(\alpha_i-\alpha_{\Omega_i})\mu_0S:\nabla\g v - \left<\g f,\g v\right>_{H^{-1},H^1_0}.
\end{equation}
Hence, we obtain that

\begin{equation}\nonumber
\sum_{i=1}^k\alpha_{\Omega_i}\int_{\partial\Omega_i}\mu_0S\g n_i\cdot \g v \leq \left(1+\norm{\mu_0S}{L^\infty(\Omega)}\sum_{i=1}^kc_i\right)\norm{\g f}{H^{-1}(\Omega)}\norm{\g v}{H^1_0(\Omega)}.
\end{equation}

Consider now a boundary $\Gamma_{ij}:=\partial\Omega_i\cap\partial\Omega_i\neq \emptyset$. Remark that the left and right normal traces of $\mu_0S$ are the same (divergence free jump condition). That is to say that $\mu_0S\g n_i=-\mu_0S\g n_j$ on $\Gamma_{ij}$ and $\mu_0S\g n_i$ belongs to $H^{\frac 12}(\Gamma_{ij})$. Consider the continuous extension operator $R_{ij}:H^{\frac 12}(\Gamma_{ij})\rightarrow H^1_0(\Omega)$ defined by $R_{ij}u|_{\Gamma_{ij}}=u$ and such that $R_{ij}u$ vanishes on all other boundaries $\Gamma_{pq}$ where $(p,q)\neq(i,j)$. These operators exist because the boundaries are distant from one another and their continuity constants can be chosen without being dependent on $\Gamma_{ij}$.

Taking now the test function $\g v_{ij}=R_{ij}(\alpha_{\Omega_i}-\alpha_{\Omega_j})\mu_0S\g n_i\in H^1_0(\Omega,\R^d)$ and using it in the last equation gives

\begin{equation}\nonumber
|\alpha_{\Omega_i}-\alpha_{\Omega_j}|^2\int_{\Gamma_{ij}}|\mu_0S\g n_i|^2 \leq c_R\left(1+\norm{\mu_0S}{L^\infty(\Omega)}\sum_{i=1}^kc_i\right)\norm{\g f}{H^{-1}(\Omega)}|\alpha_{\Omega_i}-\alpha_{\Omega_j}|\norm{\mu_0S\g n_i}{H^{\frac{1}{2}}(\Gamma_{ij})},
\end{equation}
where $c_R$ is such that $\norm{R_{ij}u}{H^1_0(\Omega)}\leq c_R\norm{u}{H^{\frac{1}{2}}(\Gamma_{ij})}$ for all $\Gamma_{ij}$ and $u\in H^{\frac{1}{2}}(\Gamma_{ij})$. Note that the constant $\int_{\Gamma_{ij}}|\mu_0S\g n_i|^2$ cannot be zero because $|\mu_0|\geq \tilde m>0$ and $|\det S|\geq m>0$. To summarise, we have shown that there exists a constant $C_2>0$ depending only on $\mu_0$, $S$,  and the decomposition $(\Omega_i)_{i=1}^k$ such that

\begin{equation}\label{eq:2}
|\alpha_{\Omega_i}-\alpha_{\Omega_j}| \leq C_2\norm{\g f}{H^{-1}(\Omega)},
\end{equation}
for all $i,j$ such that $\Omega_i$ and $\Omega_j$ share a boundary. This clearly can be extended to non-adjacent subdomains by transitivity and triangular inequality.

We now use the fact that $\mu\in\{\mu_0\}^\perp$, that is, $\int_\Omega\alpha\mu_0^2=0$. In other terms, 

\begin{equation}\nonumber
\sum_{i=1}^k\int_{\Omega_i}\alpha_i\mu_0^2=0,
\end{equation}
or

\begin{equation}\nonumber
\sum_{i=1}^k\alpha_{\Omega_i}\int_{\Omega_i}\mu_0^2= -\sum_{i=1}^k\int_{\Omega_i}(\alpha_i-\alpha_{\Omega_i})\mu_0^2.
\end{equation}
We deduce from \eqref{eq:1} that there exists a constant $C_3>0$ such that

\begin{equation}\label{eq:3}
\left|\sum_{i=1}^k\alpha_{\Omega_i}\int_{\Omega_i}\mu_0^2\right|\leq C_3\norm{\g f}{H^{-1}(\Omega)}.
\end{equation}
From $\eqref{eq:2}$ and $\eqref{eq:3}$,  we can now bound all the $\alpha_{\Omega_i}$ by just writing

\begin{equation}\nonumber\begin{aligned}
\left|\sum_{i=1}^k\alpha_{\Omega_i}\int_{\Omega_i}\mu_0^2\right| &= \left|\alpha_{\Omega_j}\int_{\Omega}\mu_0^2 + \sum_{i\neq j}(\alpha_{\Omega_i}-\alpha_{\Omega_j})\int_{\Omega_i}\mu_0^2\right| \leq C_3\norm{\g f}{H^{-1}(\Omega)},\\
|\alpha_{\Omega_j}|\int_{\Omega}\mu_0^2 &\leq C_3\norm{\g f}{H^{-1}(\Omega)} + \sum_{i\neq j}|\alpha_{\Omega_i}-\alpha_{\Omega_j}|\int_{\Omega_i}\mu_0^2,
\end{aligned}\end{equation}
and finally obtaining

\begin{equation}\nonumber
|\alpha_{\Omega_j}|\leq (C_3+kC_2)\norm{\g f}{H^{-1}(\Omega)},\quad\forall j.
\end{equation}
Combining \eqref{eq:1}, \eqref{eq:1.1}, and the previous inequality, we arrive at

\begin{equation}\nonumber
\norm{\alpha}{L^2(\Omega)}\leq C\norm{\g f}{H^{-1}(\Omega)}.
\end{equation}
Turning to $\mu=\alpha \mu_0$, we have
\begin{align*}
\norm{\mu}{L^2(\Omega)} &\leq \norm{\alpha}{L^2(\Omega)} \norm{\mu_0}{L^\infty(\Omega)} \\ &\leq C\norm{\mu_0}{L^\infty(\Omega)} \norm{\g f}{H^{-1}(\Omega)} \\
&\leq C\norm{\mu_0}{L^\infty(\Omega)} \norm{A^{\g I}_{\g u}\mu}{H^{-1}(\Omega)}.
\end{align*}
Hence, the proof is complete.

\end{proof}

\subsection{Stability estimates in $L^2(\Omega)$}
\label{sec:stability}

\begin{theorem}[Stability estimate for the null space estimation] \label{them:stability}
Consider a displacement field $\g u$ such that $\nabla^s\g u\in L^\infty(\Omega,\R^{d\times d}_\sym)$ and $A^{\g I}_{\g u}$ has closed range. Take $\mu\in L^2(\Omega)$ such that $\norm{\mu}{L^2(\Omega)}=1$ and $A^{\g I}_{\g u}(\mu)=0$. Take $\tilde{\g u}\in H^1(\Omega,\R^d)$ such that $\nabla^s\tilde{\g u}\in L^\infty(\Omega,\R^{d\times d}_\sym)$ and consider 
\begin{align*}
\tilde{\mu}:=\underset{\norm{\mu'}{L^2(\Omega)}=1,\  \int_\Omega{\mu' \mu}>0}{\mathrm{argmin}} \norm{A^{\g I}_{ \tilde{\g u}}(\mu')}{H^{-1}(\Omega)}.
\end{align*}
Then 
\begin{align*}
\norm{\tilde{\mu}-\mu}{L^2(\Omega)} \leq C \norm{\nabla^s \tilde{\g u} - \nabla^s\g u }{L^\infty(\Omega)}
\end{align*}
for some constant $C$ independent of $\tilde{\g u}$ and $\tilde{\mu}$. 
\end{theorem}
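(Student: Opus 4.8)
The plan is to combine two ingredients: an operator-perturbation estimate showing that $A^{\g I}_{\tilde{\g u}}$ and $A^{\g I}_{\g u}$ are close in operator norm whenever their strains are close in $L^\infty$, and the closed range inequality for $A^{\g I}_{\g u}$, which controls the distance to its kernel. Throughout I write $\delta:=\norm{\nabla^s\tilde{\g u}-\nabla^s\g u}{L^\infty(\Omega)}$, $S:=\nabla^s\g u$ and $\tilde S:=\nabla^s\tilde{\g u}$.

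First I would establish the perturbation bound. For any $\mu'\in L^2(\Omega)$ and any $\g v\in H^1_0(\Omega,\R^d)$, testing the difference of the two operators gives
\begin{equation}\nonumber
\left<(A^{\g I}_{\tilde{\g u}}-A^{\g I}_{\g u})(\mu'),\g v\right>_{H^{-1},H^1_0}=\int_\Omega \mu'(\tilde S-S):\nabla^s\g v,
\end{equation}
so that by Cauchy--Schwarz $\norm{(A^{\g I}_{\tilde{\g u}}-A^{\g I}_{\g u})(\mu')}{H^{-1}(\Omega)}\leq \delta\,\norm{\mu'}{L^2(\Omega)}$. In particular, since $A^{\g I}_{\g u}(\mu)=0$ and $\norm{\mu}{L^2(\Omega)}=1$, we get $\norm{A^{\g I}_{\tilde{\g u}}(\mu)}{H^{-1}(\Omega)}\leq\delta$. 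Because $\mu$ itself is admissible in the minimization defining $\tilde\mu$ (it has unit norm and $\int_\Omega\mu\,\mu=1>0$), minimality yields $\norm{A^{\g I}_{\tilde{\g u}}(\tilde\mu)}{H^{-1}(\Omega)}\leq\norm{A^{\g I}_{\tilde{\g u}}(\mu)}{H^{-1}(\Omega)}\leq\delta$.

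Next I would transfer this smallness back to the unperturbed operator and invoke the closed range property. Writing $A^{\g I}_{\g u}(\tilde\mu)=A^{\g I}_{\tilde{\g u}}(\tilde\mu)-(A^{\g I}_{\tilde{\g u}}-A^{\g I}_{\g u})(\tilde\mu)$ and applying the two previous bounds (with $\norm{\tilde\mu}{L^2(\Omega)}=1$) gives $\norm{A^{\g I}_{\g u}(\tilde\mu)}{H^{-1}(\Omega)}\leq 2\delta$. I then decompose orthogonally $\tilde\mu=\alpha\mu+\mu^\perp$, with $\alpha=\int_\Omega\tilde\mu\,\mu\in(0,1]$ (positivity from the sign constraint, $\alpha\le1$ by Cauchy--Schwarz) and $\mu^\perp\in\{\mu\}^\perp$. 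In the situation of interest the kernel is one-dimensional, $N(A^{\g I}_{\g u})=\Span\{\mu\}$ by Theorems \ref{theo:sobolev}--\ref{theo:sbv}, so $\{\mu\}^\perp=N(A^{\g I}_{\g u})^\perp$ and the closed range inequality applies to $\mu^\perp$. Since $A^{\g I}_{\g u}(\mu)=0$, we have $A^{\g I}_{\g u}(\mu^\perp)=A^{\g I}_{\g u}(\tilde\mu)$, whence
\begin{equation}\nonumber
\norm{\mu^\perp}{L^2(\Omega)}\leq c\,\norm{A^{\g I}_{\g u}(\mu^\perp)}{H^{-1}(\Omega)}=c\,\norm{A^{\g I}_{\g u}(\tilde\mu)}{H^{-1}(\Omega)}\leq 2c\,\delta.
\end{equation}

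Finally I would convert the control of $\mu^\perp$ into the desired bound on $\tilde\mu-\mu$. From $\norm{\tilde\mu}{L^2(\Omega)}=\norm{\mu}{L^2(\Omega)}=1$ one computes $\norm{\tilde\mu-\mu}{L^2(\Omega)}^2=2(1-\alpha)$ and $\norm{\mu^\perp}{L^2(\Omega)}^2=1-\alpha^2=(1-\alpha)(1+\alpha)$; since $\alpha\in(0,1]$ this gives $\norm{\tilde\mu-\mu}{L^2(\Omega)}^2=2(1-\alpha)\le 2\norm{\mu^\perp}{L^2(\Omega)}^2$, hence $\norm{\tilde\mu-\mu}{L^2(\Omega)}\le\sqrt2\,\norm{\mu^\perp}{L^2(\Omega)}\le 2\sqrt2\,c\,\delta$, which is the claim with $C=2\sqrt2\,c$. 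The only genuinely delicate point is the bookkeeping that turns the raw estimate $\norm{\mu^\perp}{L^2(\Omega)}\leq 2c\delta$ into closeness to the \emph{specific} null vector $\mu$: this is exactly where the one-dimensionality of $N(A^{\g I}_{\g u})$ and the sign constraint $\int_\Omega\mu'\mu>0$ enter, the latter selecting $\alpha>0$ so that $\tilde\mu$ is close to $\mu$ rather than to $-\mu$. Existence of the minimizer $\tilde\mu$ is taken for granted as part of the statement.
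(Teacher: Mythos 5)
Your proposal is correct and follows essentially the same route as the paper's proof: the same orthogonal decomposition $\tilde\mu=\alpha\mu+\mu^\perp$ with the Pythagoras identity $\norm{\tilde\mu-\mu}{L^2(\Omega)}^2=2(1-\alpha)\le 2\norm{\mu^\perp}{L^2(\Omega)}^2$, the same use of minimality of $\tilde\mu$ and the perturbation bound $\norm{(A^{\g I}_{\tilde{\g u}}-A^{\g I}_{\g u})(\mu')}{H^{-1}(\Omega)}\le\delta\norm{\mu'}{L^2(\Omega)}$, and the same closed-range inequality, yielding the identical constant $2\sqrt2\,c$. The only difference is cosmetic ordering of the inequalities, plus your welcome explicit remark that $N(A^{\g I}_{\g u})=\Span\{\mu\}$ is what lets the closed-range inequality apply on $\{\mu\}^\perp$.
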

\begin{proof}
Write $\tilde{\mu}=\alpha \mu + \nu $, with $\nu \perp \mu$, $\alpha\in [0,1]$. Pythagoras theorem gives \mbox{$\alpha^2 + \norm{\nu}{L^2(\Omega)}^2= 1$} and $\norm{\tilde{\mu} - \mu}{L^2(\Omega)}^2 = (\alpha-1)^2 + \norm{\nu}{L^2(\Omega)}^2 = (\alpha-1)^2 + (1- \alpha^2) = 2(1-\alpha)$.
Since $1-\alpha\leq 1- \alpha^2 = \norm{\nu}{L^2(\Omega)}^2$, $$\norm{\tilde{ \mu} -\mu}{L^2(\Omega)}^2 \leq 2\norm{\nu}{L^2(\Omega)}^2.$$
Since $A^{\g I}_{\g u}$ has the closed range property, Theorem \ref{theo:closedrangew1p} yields
\begin{align*}
\norm{\nu}{L^2(\Omega)} &\leq c \norm{A^{\g I}_{\g u}(\nu)}{H^{-1}(\Omega)} \\ 
&\leq c \norm{A^{\g I}_{\g u}(\tilde{\mu}-\alpha \mu)}{H^{-1}(\Omega)} \\
&\leq c \norm{A^{\g I}_{\g u}(\tilde{\mu})}{H^{-1}(\Omega)}  \\
&\leq c \left( \norm{A^{\g I}_{\tilde{\g u}}(\tilde{\mu})}{H^{-1}(\Omega)}  + \norm{\left[A^{\g I}_{\g u} - A^{\g I}_{\tilde{\g u}}\right](\tilde{\mu})}{H^{-1}(\Omega)} \right) \\
&\leq c \left(\norm{A^{\g I}_{\tilde{\g u}}(\mu)}{H^{-1}(\Omega)} + \norm{\left[A^{\g I}_{\g u} - A^{\g I}_{\tilde{\g u}}\right](\tilde{\mu})}{H^{-1}(\Omega)}\right) \\
&\leq c \left( \norm{\left[A^{\g I}_{\tilde{\g u}} - A^{\g I}_{\g u}\right](\mu)}{H^{-1}(\Omega)} + \norm{\left[A^{\g I}_{\g u} - A^{\g I}_{\tilde{\g u}}\right](\tilde{\mu})}{H^{-1}(\Omega)}\right).
\end{align*}
Since \begin{align*}
\norm{\left[A^{\g I}_{\tilde{\g u}} - A^{\g I}_{\g u}\right](\mu)}{H^{-1}(\Omega)} = &\norm{\nabla \cdot\left[ \left( \nabla^s\tilde{\g u}-\nabla^s \g u\right)\mu\right]}{H^{-1}(\Omega)} \\ 
&\leq \norm{\left( \nabla^s\tilde{\g u}-\nabla^s \g u\right)\mu}{L^2(\Omega)} \\
&\leq \norm{ \nabla^s\tilde{\g u}-\nabla^s \g u}{L^\infty(\Omega)} \norm{\mu }{L^2(\Omega)} \\
&\leq \norm{ \nabla^s\tilde{\g u}-\nabla^s \g u}{L^\infty(\Omega)} 
\end{align*}
and \begin{align*}
\norm{\left[A^{\g I}_{\g u} - A^{\g I}_{\tilde{\g u}}\right](\tilde{\mu})}{H^{-1}(\Omega)} \leq \norm{ \nabla^s\tilde{\g u}-\nabla^s \g u}{L^\infty(\Omega)},
\end{align*} the following holds:
\begin{align*}
\norm{\tilde{\mu} - \mu }{L^2(\Omega)} \leq 2\sqrt{2}c \norm{ \nabla^s\tilde{\g u}-\nabla^s \g u}{L^\infty(\Omega)}.
 \end{align*}

\end{proof}

\begin{theorem}[General stability estimate] Consider two displacement fields $\g u, \tilde{\g u}$ such that $\nabla^s\g u$ and such that $ \nabla^s\tilde{\g u}\in L^\infty(\Omega,\R^{d\times d}_\sym)$ and $A^{\g I}_{\g u}$ has closed range. Take a real number $r>0$ and $\mu,\tilde{\mu}\in L^2(\Omega)$ respectively solutions of

\begin{equation}\nonumber
A_{\g u}^{\g I}(\mu)=\g f,\quad \int_\Omega\mu=1,\quad \norm{\mu}{L^2(\Omega)}\leq r,
\end{equation}
\begin{equation}\nonumber
A_{\tilde{\g u}}^{\g I}(\tilde \mu)=\tilde{\g f},\quad \int_\Omega\tilde\mu=1,\quad \norm{\tilde\mu}{L^2(\Omega)}\leq r.
\end{equation}
There exists a constant $C>0$ independent on $\tilde{\g u}, \tilde\mu, \tilde{\g f}$ such that,

 \begin{equation}\nonumber
 \norm{\tilde\mu-\mu}{L^2(\Omega)}\leq C\left(\norm{\tilde{\g f}-\g f}{H^{-1}(\Omega)^d}+r\norm{\nabla^s\tilde{\g u}-\nabla^s\g u}{L^\infty(\Omega)}\right).
 \end{equation}
\end{theorem}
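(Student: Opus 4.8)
The plan is to set $w := \tilde\mu - \mu$ and bound $\norm{w}{L^2(\Omega)}$ by combining the closed range of $A^{\g I}_{\g u}$ with the two normalizations $\int_\Omega\mu = \int_\Omega\tilde\mu = 1$, which force $\int_\Omega w = 0$. First I would compute the image $A^{\g I}_{\g u}(w)$ and isolate the two error sources. Since $A^{\g I}_{\g u}(\mu) = \g f$, we have $A^{\g I}_{\g u}(w) = A^{\g I}_{\g u}(\tilde\mu) - \g f$, and inserting $A^{\g I}_{\tilde{\g u}}(\tilde\mu) = \tilde{\g f}$ gives
\begin{equation*}
A^{\g I}_{\g u}(w) = (\tilde{\g f} - \g f) + \big[A^{\g I}_{\g u} - A^{\g I}_{\tilde{\g u}}\big](\tilde\mu).
\end{equation*}
Because $\big[A^{\g I}_{\g u} - A^{\g I}_{\tilde{\g u}}\big](\tilde\mu) = -\nabla\cdot\big((\nabla^s\g u - \nabla^s\tilde{\g u})\tilde\mu\big)$, exactly the term already estimated in the proof of Theorem \ref{them:stability}, its $H^{-1}$-norm is at most $\norm{\nabla^s\tilde{\g u} - \nabla^s\g u}{L^\infty(\Omega)}\norm{\tilde\mu}{L^2(\Omega)} \leq r\norm{\nabla^s\tilde{\g u} - \nabla^s\g u}{L^\infty(\Omega)}$. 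Hence
\begin{equation*}
\norm{A^{\g I}_{\g u}(w)}{H^{-1}(\Omega)} \leq \norm{\tilde{\g f} - \g f}{H^{-1}(\Omega)} + r\norm{\nabla^s\tilde{\g u} - \nabla^s\g u}{L^\infty(\Omega)},
\end{equation*}
which is precisely the right-hand side we are aiming for.

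Next I would transfer this control of the image back to $w$ itself. Writing $P$ for the orthogonal projection of $L^2(\Omega)$ onto $N(A^{\g I}_{\g u})^\perp$, the closed range hypothesis in its equivalent coercive form gives $\norm{Pw}{L^2(\Omega)} \leq c\,\norm{A^{\g I}_{\g u}(Pw)}{H^{-1}(\Omega)} = c\,\norm{A^{\g I}_{\g u}(w)}{H^{-1}(\Omega)}$, the last equality holding because $(I-P)w$ lies in $N(A^{\g I}_{\g u})$ and is therefore annihilated by $A^{\g I}_{\g u}$. If $N(A^{\g I}_{\g u}) = \{0\}$, then $P$ is the identity and the desired estimate is already complete.

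The hard part will be the remaining case, where by Theorems \ref{theo:sobolev}--\ref{theo:sbv} we have $N(A^{\g I}_{\g u}) = \Span\{\mu_0\}$ and $(I-P)w = \beta\mu_0$ for some $\beta\in\R$: the null-space component of $w$ is invisible to $A^{\g I}_{\g u}$ and must be recovered from the normalization alone. Here I would use $\int_\Omega w = 0$, which reads $\int_\Omega Pw + \beta\int_\Omega\mu_0 = 0$. The crucial structural fact is that $\int_\Omega\mu_0 \neq 0$; this is precisely the condition under which the constraint $\int_\Omega\mu = 1$ selects a unique solution, and it holds because $\mu_0$ may be taken of constant sign (positive, in the setting of Theorem \ref{theo:sobolev}). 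Consequently $\beta = -\big(\int_\Omega\mu_0\big)^{-1}\int_\Omega Pw$, and Cauchy--Schwarz yields $|\beta| \leq \sqrt{|\Omega|}\,\big|\int_\Omega\mu_0\big|^{-1}\norm{Pw}{L^2(\Omega)}$.

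To conclude I would recombine the two components orthogonally,
\begin{equation*}
\norm{w}{L^2(\Omega)}^2 = \norm{Pw}{L^2(\Omega)}^2 + \beta^2\norm{\mu_0}{L^2(\Omega)}^2 \leq \Big(1 + \frac{|\Omega|\,\norm{\mu_0}{L^2(\Omega)}^2}{(\int_\Omega\mu_0)^2}\Big)\norm{Pw}{L^2(\Omega)}^2,
\end{equation*}
and then chain this with the bound $\norm{Pw}{L^2(\Omega)} \leq c\,\norm{A^{\g I}_{\g u}(w)}{H^{-1}(\Omega)}$ and the image estimate of the first paragraph. This produces the stated inequality with a constant $C$ depending only on the coercivity constant $c$, on $\mu_0$, on $|\Omega|$ and on $\int_\Omega\mu_0$, none of which involve $\tilde{\g u}$, $\tilde\mu$ or $\tilde{\g f}$. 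The only genuinely delicate point to pin down rigorously is the non-vanishing (and sign) of $\int_\Omega\mu_0$; every other step merely repeats the operator-difference estimates already established in Theorem \ref{them:stability}.
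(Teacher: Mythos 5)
Your proposal is correct and follows essentially the same route as the paper: the same operator-difference identity for $A^{\g I}_{\g u}(\tilde\mu-\mu)$, the same split into the null-space component and its orthogonal complement (your $Pw$ and $\beta\mu_0$ are the paper's $\tilde\nu-\nu$ and $(\tilde\alpha-\alpha)\mu_0$), and the same use of $\int_\Omega(\tilde\mu-\mu)=0$ with Cauchy--Schwarz to control the null-space coefficient. Your explicit remark that the argument hinges on $\int_\Omega\mu_0\neq 0$ is a fair observation; the paper simply asserts $m_0:=\int_\Omega\mu_0\neq 0$, which is justified by the positivity of $\mu_0$ from Theorem \ref{theo:sobolev}.
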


\begin{proof}  By difference, we write that $A_{\g u}^{\g I}(\tilde\mu-\mu) = \tilde{\g f}-\g f + (A_{\g u}^{\g I}-A_{\tilde{\g u}}^{\g I})\tilde\mu$. If $N(A_{\g u}^{\g I})=\{0\}$ then $\tilde\mu-\mu\in N(A_{\g u}^{\g I})^\perp$ and applying the closed range property in the same manner than for the previous Theorem we get
 
 \begin{equation}\nonumber
 \norm{\tilde\mu-\mu}{L^2(\Omega)}\leq C\left(\norm{\tilde{\g f}-\g f}{H^{-1}(\Omega)}+r\norm{\nabla^s\tilde{\g u} - \nabla^s{\g u}}{L^\infty(\Omega)}\right).
 \end{equation}
 If $N(A_{\g u}^{\g I})=\Span\{\mu_0\}$, where $\norm{\mu_0}{L^2(\Omega)}=1$ and $m_0:=\int_\Omega\mu_0\neq 0$, we decompose $\mu$ as $\mu=\alpha\mu_0+\nu$ and $\tilde{\mu}$ as $\tilde{\mu}=\tilde\alpha\mu_0+\tilde\nu$ where $\nu,\tilde\nu\in N(A_{\g u}^{\g I})^\perp$. Then we have
 
  \begin{equation}\label{eq:nu}
 \norm{\nu - \tilde\nu}{L^2(\Omega)}\leq C\left(\norm{\tilde{\g f}-\g f}{H^{-1}(\Omega)}+r\norm{\nabla^s\tilde{\g u} - \nabla^s{\g u}}{L^\infty(\Omega)}\right).
  \end{equation}
  By Pythagoras' theorem,  $\norm{\tilde\mu-\mu}{L^2(\Omega)}^2 = (\tilde\alpha-\alpha)^2 + \norm{\tilde\nu - \nu}{L^2(\Omega)}^2$.
 Now using that $$\int_\Omega(\tilde\mu-\mu)=(\tilde\alpha-\alpha)\int_\Omega \mu_0 + \int_\Omega (\tilde\nu - \nu)=0,$$ we get that
  
  \begin{equation}\nonumber\begin{aligned}
    (\tilde\alpha-\alpha)^2m_0^2 &\leq |\Omega|\norm{\tilde\nu - \nu}{L^2(\Omega)}^2,\\
  \end{aligned}\end{equation}
  and hence, 
  
    \begin{equation}\label{eq:alpha}\begin{aligned}
    \norm{\tilde\mu-\mu}{L^2(\Omega)}^2\leq\left(1+\frac{|\Omega|}{m_0^2}\right)\norm{\tilde\nu - \nu}{L^2(\Omega)}^2.
  \end{aligned}\end{equation}  
 We conclude by combining inequalities \eqref{eq:nu} and \eqref{eq:alpha}.
\end{proof}

\section{Numerical experiments in the static case}\label{sec:num}

The objective here is to numerically reconstruct an elasticity tensor $\g C(x)$  in a smooth domain $\Omega\subset\widetilde \Omega\in\R^2$
from the knowledge of a set of data $\{ (\g u^{\ell}, \g f^{\ell}) \}_{\ell = 1}^n$  satisfying the linear elasticity equation 

\begin{equation}\nonumber
 - \nabla \cdot \left( \g C : \nabla^s \g u^{\ell} \right) = \g 0.
\end{equation}

\subsection{Forward problem and data generation}

In order to generate different displacement fields $\g u^\ell\in H^1(\Omega,\R^2)$ of static elastic deformation, we use the classic finite elements approach to solve the boundary-value problem

\begin{equation}\nonumber\left\{\begin{aligned}
 - \nabla \cdot \left( \g C : \nabla^s \g u^{\ell} \right) &= \g 0 \quad\text{ in }\widetilde\Omega,\\
 \g u^\ell &=\g 0 \quad\text{ on } \Gamma_{\text{Dir}},\\
 \left(\g C : \nabla^s \g u^{\ell} \right)\cdot\g n &=\g g^\ell \quad\text{ on } \Gamma_{\text{Neu}},\\
  \left(\g C : \nabla^s \g u^{\ell} \right)\cdot\g n &=\g 0 \quad\text{ elsewhere on }\partial\widetilde\Omega, \\
\end{aligned}\right.\end{equation}
where $\g g^\ell$ could be any surface force density. In the simulations, we use $\widetilde \Omega = (-1,1)^2$ and $\Gamma_{\text{Dir}}$ and $\Gamma_{\text{Neu}}$ are as described in Figure \ref{fig:forward}.

\begin{figure}
\centering
\begin{tikzpicture}[scale=2]
\draw[dashed, line width = 0, fill = blue!10] (-1,-1) -- (-1,1) --(1,1) --(1,-1) -- (-1,-1);
\draw[line width = 0.7,red] (1,1) -- node[above]{$\Gamma_{\text{Neu}}$}(-1,1) ;
\draw[line width = 0.7] (-1,-1) -- (1,-1) ;

\draw (0,-1) node[below]{$\Gamma_{\text{Dir}}$};
\draw (0,0) node{$\Omega$};
\draw[red] (0.5,1) node[above]{$\g g^\ell$};

\foreach \x in {1,...,9}
\draw[line width = 1.2,->,red] (-1+0.2*\x,1) -- +(0.2,-0.2) ;
\node at (2.5,0){\includegraphics[width=.25\textwidth]{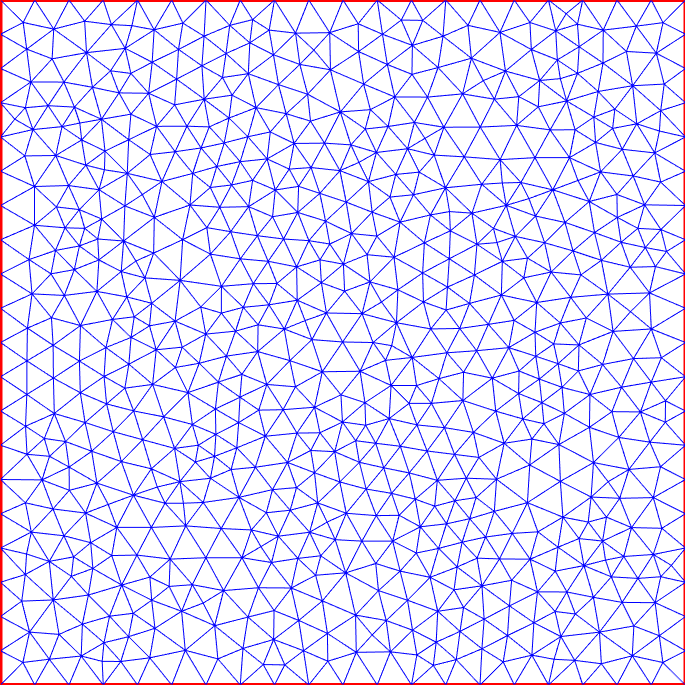}};
\node at (5,0){\includegraphics[width=0.29\textwidth]{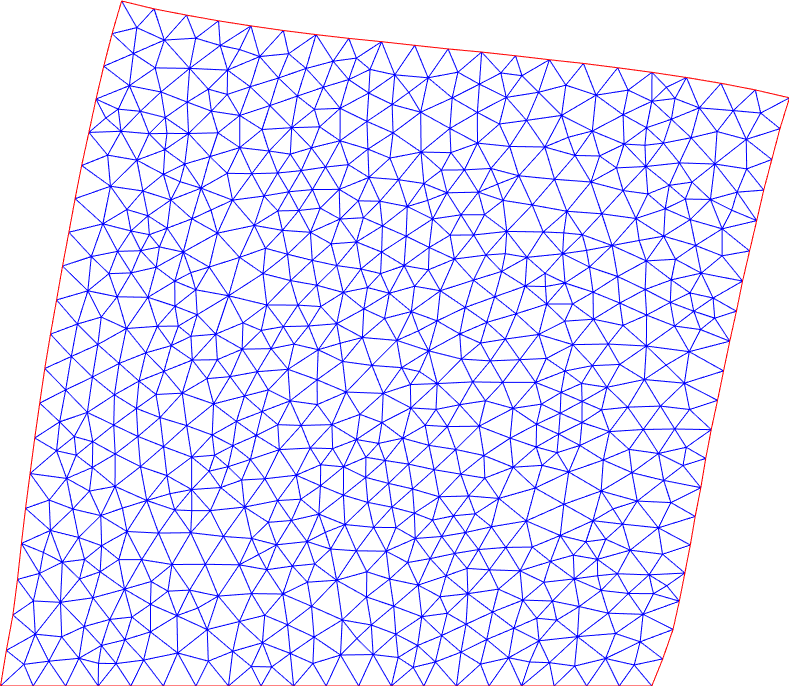}};

\node at (2.5,1.1){Original mesh};
\node at (5,1.1){Deformed mesh};

\end{tikzpicture}

\label{fig:forward}
\caption{Numerical experimental setting: a non-structured meshing of the domain $\Omega$ and the computed  elastic deformation.}
\end{figure}

The computations of direct data are made using the $\cP^0-\cP^1$ finite elements method. The solution $\g u^\ell$ is then interpolated and recorded on a structured Cartesian grid over $\widetilde \Omega$. The inverse problem is solved on a subdomain $\Omega\subset\widetilde \Omega$ endowed with a new non-structured mesh. This prevents from committing an inverse crime in inverting the problem using the same discrete operator as the one used for the direct problem.

\subsection{Finite elements discretization}

We assume here without loss of generality  that the chosen domain $\Omega\in\R^2$ is polygonal that admits an exact triangular mesh $\T_h = \{ T_{i} \}_{i=1}^{N_\T}$ for any small maximum edge length $h>0$. More precisely, $\T_h$ is a set of $N_T$ open triangles such that $T_i\cap T_j=\emptyset$ if $i\neq j$ and $\overline\Omega=\bigcup_{i=1}^{N_T}\overline{T_i}$.  Let us introduce the classic function spaces associated to $\T_h$:

\begin{itemize}
 \item[(i)] The space $\cP^0(\T_h)$  is the set of functions that are constant on each triangle:
 
 \begin{equation}\nonumber
  \cP^0(\T_h) = \left\{ u \in L^{2}(\Omega,\R),\   \forall i,\  u|_{T_i} \text{ is constant} \right\}.
 \end{equation}
Its canonical basis  $(\xi_j)_{j=1}^{N_T}$ is given by $\xi_j:=\1_{T_i}$.
 \item[(ii)]  The space $\cP^1(\T_h)$ is
 the set of continuous functions that are linear on each triangle $T_i$:  
$$
  \cP^1(\T_h) = \left\{ v \in H^1( \Omega),\   \forall i,\ v|_{T_i} \text{ is linear} \right\}.
$$ 
Its canonical basis  $(e_i)_{i=1}^{N_n}$ is defined by $e_i(x_j)=\delta_{ij}$ for any node $x_j$ of the triangulation $\T_h$.
 \item[(iii)]  The space $\cP^1({\T_h},\R^2)$ is the set of vector-valued $\cP^1(\T_h)$ functions. Its canonical basis is denoted $(\g e_i)_{i=1}^{2N_n}$.
 
  \item[(iv)]  The space $\cP^1_0({\T_h})$, (resp. $\cP^1_0({\T_h},\R^2)$) is the space of $\cP^1(\T_h)$ (resp. $\cP^1(\T_h,\R^2)$) functions that vanish on $\partial\Omega$:
  
  \begin{equation}\nonumber
  \cP^1_0({\T_h}):=\cP^1({\T_h})\cap H^1_0(\Omega)\quad\text{ and }\quad  \cP^1_0({\T_h},\R^2):=\cP^1({\T_h})\cap H^1_0(\Omega,\R^2).
  \end{equation}
  
\item[(v)]  Its canonical basis is denoted by $(\tilde{e}_i)_{i=1}^{N_\text{int}}\subset (e_i)_{i=1}^{N_n}$ (resp. $(\tilde{\g e}_i)_{i=1}^{2N_\text{int}}\subset (\g e_i)_{i=1}^{2N_n}$) where $N_\text{int}$ is the number of internal nodes of the mesh.

\end{itemize}

Scalar functions $\mu^{(k)}$ and  displacement fields $\g u^{\ell}$ are then projected respectively on the bases of $\cP^0(\T_h)$ and $\cP^1(\T_h)$:
$$ \mu^{(k)}(x) = \sum_{j=1}^{N_T} \mu^{(k)}_j \xi_j(x) \quad \text{ and }\quad \g u^{\ell}(x) = \sum_{i=1}^{2 N_n} u^{\ell}_i \g e_{i}(x).$$

\subsection{Discrete formulation of the inverse problem}

We assume the knowledge of a model for the elasticity tensors $\g C$ of the form
  
$$\g C(x) = \sum_{k=1}^{N} \mu^{(k)}(x) \g C^{k},$$
where  all the unknown scalar fields $\mu^{(k)}$  belong to $L^{2}(\Omega)$ and the constant tensors $\g C^k$ are known.
Recall that the reconstruction problem of each fields $\mu^{(k)}$ reads as the linear problem

\begin{align}
 \begin{pmatrix}
  A^{\g C^{1}}_{{\g u}^{1}} & \ldots &  A^{\g C^{N}}_{{\g u}^{1}} \\
  \vdots &  & \vdots \\
   A^{\g C^{1}}_{{\g u}^{n}} & \ldots &  A^{\g C^{N}}_{{\g u}^{n}} 
 \end{pmatrix} \begin{pmatrix} \mu^{1} \\ \vdots \\ \mu^{N}
  \end{pmatrix} = \begin{pmatrix} \g f^{1} \\ \vdots \\ \g f^{n}
  \end{pmatrix},
\end{align}
where the operator $A_{\g u}^{\g C}$ is defined by $A_{\g u}^{\g C} \mu = - \nabla\cdot\left(\mu \g C:\nabla^s\g u\right),$
or in a weak sense by 
\begin{equation}\label{equ:weakF_A}
 \langle A_{\g u}^{\g C} \mu , \g v \rangle_{H^{-1},H^1_0} = \int_{\Omega} \mu(x)(\g C : \nabla^s \g u(x) ) :\nabla^s \g v(x)\td x, \quad \forall \g v \in H^1_0(\Omega, \R^d).
\end{equation}
These operators admit a   straightforward finite elements discretization defining the matrices $\A^{\g C}_{\g u}\in \R^{N_\text{int}\times N_T}$ and $\F^\ell\in \R^{N_\text{int}}$ as

\begin{equation}\begin{aligned}
(\A^{\g C}_{\g u})_{ij} &:=\langle A_{\g u}^{\g C} \xi_j , \tilde{\g e}_i \rangle_{H^{-1},H^1_0} =  \int_{\Omega} \xi_j(x)(\g C : \nabla^s \g u(x) ) :\nabla^s \tilde{\g e}_i(x)\td x,\\
\F^{\ell}_{i} &:=\langle \g f^\ell, \tilde{\g e}_i \rangle_{H^{-1},H^1_0}.
\end{aligned}\end{equation}
Introducing now the block matrices

\begin{equation}\nonumber
 \A := 
 \begin{pmatrix}
  \A ^{\g C^{1}}_{{\g u}^{1}} & \ldots &  \A^{\g C^{N}}_{{\g u}^{1}} \\
  \vdots &  & \vdots \\
   \A^{\g C^{1}}_{{\g u}^{n}} & \ldots &  \A^{\g C^{N}}_{{\g u}^{n}} 
 \end{pmatrix}\in \R^{nN_\text{int}\times N N_T},\quad
 \F := \begin{pmatrix}
  \F^1  \\
  \vdots\\
   \F^n
 \end{pmatrix}\in \R^{nN_\text{int}},\quad 
  \M = \begin{pmatrix}
  \mu^1  \\
  \vdots\\
   \mu^N
 \end{pmatrix}\in \R^{N N_T},
\end{equation}
the general inverse problem admits a simple discrete projection on the finite elements spaces and reads as 

\begin{equation}
\A\M = \F.
\end{equation}

Note that,  in the static case, $\F=0$ leads to an eigenvector problem. In this case,  the formulation becomes 

\begin{equation}
\A\M = 0,\quad \norm{\M}{2}=1.
\end{equation}

As will be seen
later, in practice it might be more convenient to impose a positivity constraint over $\M:=(\mu^1,\dots,\mu^n)^T$ of the form 

\begin{equation}
\mu^{(k)} \geq \mu^{(k)}_{\min}>0,\quad \forall k,
\end{equation}
since the unknown elastic parameters are indeed positive valued functions.

\subsection{Least squares approach and regularization}

We recall that,  we  have proved for the shear modulus case that the reconstruction problem  of  $\mu$ from  $\{(\g u^{\ell}, \g f^{\ell})\}_{\ell = 1:n}$
is theoretically well-posed in the continuous setting. This is not clearly the case in the discretized version as 
the linear system $\A\M  = \mathbb{F}$ has $N N_T$  degrees of freedom with  only 
$n N_n$ equations. In practice,  we compute the fields $\mu$ by  minimizing a regularized mean squares functional of the form     
$$ J(\mu) = \| \A\M - \mathbb{F}\|^2_2 + R_{\text{TV}}(\M),$$
where the regularization  $R_{\text{TV}}(\M)$ penalizes the total variation of each $\mu^{(k)}$.  More precisely, in the case where $\mu\in\cP^0(\T_h)$ that admits the decomposition $\mu(x)=\sum_{j=1}^{N_T} \mu_j\xi_j(x)$,
we can show that  the $TV$ semi-norm can be directly expressed as a linear $L^1$-penalization. 
 As it is clear that $\cP^0(\T_h) \subset BV(\Omega)$, we have

\begin{equation}\nonumber\begin{aligned}
|\mu|_{TV(\Omega)} = \int_{\Omega}|D \mu|.
\end{aligned}\end{equation}
Call $E=\{(i,j)\in\{1,\dots,N_n\}^2,\ i\neq j,\ \partial T_i\cap \partial T_j\neq \emptyset\}$ the set of all the oriented internal edges. As $\mu$ is constant on each triangle $T_i$, the Radon measure derivative $D \mu$ is given by

\begin{equation}\nonumber
D \mu = \frac 12\sum_{(i,j)\in E}(\mu_i-\mu_j) \cH^{1}_{\partial T_i\cap \partial T_j}\nu_{ij},
\end{equation}
where $\nu_{ij}$ is the normal vector from triangles $T_i$ to $T_j$ and $\cH^{1}_{\partial T_i\cap \partial T_j}$ is the restriction of the dimension one Hausdorff measure to the edge $(i,j)$. Hence,
\begin{equation}\nonumber\begin{aligned}
|\mu|_{TV(\Omega)} = \int_{\Omega}|D \mu| = \frac 12\sum_{(i,j)\in E}|\mu_i-\mu_j| \cH^{1}(\partial T_i\cap \partial T_j).
\end{aligned}\end{equation}
We observe that if one defines the linear operator $L:\cP^0(\T_h)\rightarrow\R^{\text{card}(E)}$ by
\begin{equation}\nonumber
(L\mu)_{(i,j)\in E} = (\mu_i-\mu_j)\cH^{1}(\partial T_i\cap \partial T_j),
\end{equation}
then
\begin{equation}\nonumber\begin{aligned}
|\mu|_{TV(\Omega)} = \frac 12\norm{L\mu}{\ell^1(E)}.
\end{aligned}\end{equation}
Finally, it can be shown that the TV-regularization term  $R_{\text{TV}}(\M)$ can be expressed under the form 
\begin{equation}
 R_{\text{TV}}(\M) = \sum_{k=1}^n\e^{(k)}_{\text{TV}} \norm{ L \mu^{(k)}}{\ell^1(E)},
\end{equation}
where $\e^{(k)}_{\text{TV}}$ are regularizing parameters. The reconstruction of $\mu$ can then be computed by minimizing the functional
\begin{equation}\label{eq:optim}
J(\M) = \| \A\M - \mathbb{F}\|^2_2 +  \sum_{k=1}^n\e^{(k)}_{\text{TV}} \norm{ L \mu^{(k)}}{\ell^1(E)},
\end{equation}
subject to $\M\geq\M_{\min}$ where $\M_{\min}:=(\mu^{(1)}U,\dots,\mu^{(n)}U)^T$ with $U:=(1,\dots,1)\in \R^{N_T}$.

\begin{remark}
In practice the minimisation of \eqref{eq:optim} can be achieved with any efficient optimisation routine. Here, we used the \it CVX  Matlab toolbox \cite{cvx,gb08} which is well-adapted  to this kind of convex optimization problems under linear constraints.   
\end{remark}

\begin{remark}
As the given displacement fields $\g u^{\ell}$ are in general noisy, which significantly affect the eigenvalues of the associated operator $\mathbb{A}$, it could be convenient to introduce a beforehand smoothing of these vector fields. More precisely, a natural way is to consider an elastic regularization 
$\g u^{\ell}_{\e_{\text{elas}}}$ 
defined by
$$\g u^{\ell}_{\e_{\text{elas}}} = \text{argmin}_{\g u} \left\{ \frac{1}{\e_{\text{elas}}} \| \g u - \g u^{\ell}  \|^{2}_{L^2(\Omega)}
+\| \nabla^s \g u \|^{2}_{L^2(\Omega)}  \right\} . $$
This is also equivalent to compute $\g u^{\ell}_{\e_{\text{elas}}}$  in the finite elements context as the solution 
of the following linear system 
$$ \g u^{\ell}_{\e_{\text{elas}}}  = ( \e_{\text{elas}} M + L)^{-1}(M  \g u^{\ell}),$$
where $M$ and $L$ are respectively  the mass and the vector stiffness matrix and $\e_{\text{elas}} >0$ is a regularization parameter.   
\end{remark}

\subsection{Numerical experiments}\label{subsec:num}

The motivation  is now  to present some numerical experiments in the static case  where the tensor $\g C$ 
is assumed to be of the form:  
\begin{itemize}
 \item[(i)]  A shear modulus reconstruction only: $\g C= \mu {\g I}$;
 \item[(ii)]   A two Lam\'e parameters reconstruction: $\g C= 2 \mu {\g I} + \lambda I \otimes I$;
  \item[(iii)]  An anisotropic  stiffness reconstruction: $\g C =  \mu^{(1)} {\g C}^{1} + \mu^{(2)} {\g C}^{2} + \mu^{(3)} {\g C}^{3}$, where
  tensors ${\g C}^{1}$, ${\g C}^{2}$ and  ${\g C}^{3}$ are defined by  (\ref{add1}) and  (\ref{add2}).
\end{itemize}

We  only present  in this paper some numerical experiments in the static case 
but  other experiments have be done in the harmonic regime with similar results.
In each case, we then use the following additional constraints on $\mu^{(k)}$:   
$$ \mu^{(k)} \geq 1.$$

We first consider the simplest case of  shear medium  $\g C = \mu {\g I}$ in order to illustrate and analyze 
the influence of each of the regularization parameters $\e_{TV}$ and $\e_{\text{elas}}$ on the reconstruction. 
In particular, we will see that the reconstruction of $\mu$ is very accurate as soon as the 
choice of $\e_{TV}$ and $\e_{\text{elas}}$ is appropriate. 

We show that our methodology still works in the case of more complex tensor $\g C$. 
In particular, we highlight that the reconstruction of $(\lambda,\mu)$ in the isotropic elastic case 
and  $(\mu^{(1)},\mu^{(2)},\mu^{(3)})$ in the anisotropic shear case are also accurate provided that 
the number of the sets of data $\{\g u^{\ell}\}$ is sufficiently large. 

\subsubsection{Shear modulus inversion}

In this  subsection, we first focus on the case $\g C = \mu(x) {\g I}$, where we consider 
three different choices for the shear modulus $(\mu_1,\mu_2,\mu_3)$, which are illustrated in Figure \ref{fig:mu}. 
In each case, we compute the direct elastic vector fields $\g u^{(1)}$ associated to the same boundary conditions. 
Each solution is plotted  in Figure \ref{fig:u_mu} and we can  
observe the similarity of the different elastic fields $\g u^{(1)}$. 
Notice that the mesh used to compute the elastic vector fields has been build such as the characteristic size of each triangle
is about $h = 0.01$.

About the  reconstruction of the shear modulus $\mu$, we recall that we need  to fix only the two  
regularization parameters $\e_{TV}$ and $\e_{\text{elas}}$. In each case, we also use the same mesh where the  triangles have now a
characteristic size of the order of $h = 0.03$. 

The first experiments illustrated in Figure \ref{fig:influence_e_TV} have been done  with $\e_{\text{elas}} = 10^{-5}$. Each column corresponds to different values of $\e_{TV}$ which are respectively  equal to 
$ \e_{TV} = 10^{-6}$, $ \e_{TV} = 10^{-5}$, and $ \e_{TV} = 10^{-4}$. 
Each line corresponds to the data associated with $\mu$, $\tilde{\mu}$, and $\mu_3$. 

We can  observe that the reconstruction is perturbed if $\e_{TV}$ is too small and 
 becomes very quantitative with an appropriate choice of $\e_{TV}$. These first experiments 
 show the advantage of the TV regularization which preserves the discontinuities. 
 Finally, it shows the real possibility  of reconstructing a non-smooth shear modulus $\mu$ with 
 only one set of data $\{\g u^{(1)}\}$. 
 
The second experiments (presented in Figure \ref{fig:influence_e_elastic}) show the influence of an elastic regularization on the data. 
Indeed, the estimation  can be noisy and need in practice to be regularized.
We then try here to understand the influence of an elastic regularization on the quality of the reconstruction. 
We then fix the value of $ \e_{TV} = 10^{-4}$ and compare the reconstruction of $\mu$ obtained with 
$ \e_{\text{elas}} = 10^{-5}$, $ \e_{\text{elas}} = 10^{-4}$ and $ \e_{\text{elas}} = 10^{-3}$. It then clearly appears that the effect of 
the elastic regularization is to smooth the reconstruction of the shear modulus $\mu$. 

Finally, as  expected by our theoretical results, these experiments clearly demonstrate  the ability of our 
methodology  to  reconstruct non-smooth shear modulus  $\mu$ using only one set of data $\{\g u^{\ell}\}$.

\def\imagescaledef{0.5}
\def\textscaledef{0.5}
\def\sdef{-0.8cm}

\begin{figure}
\begin{center}
\def\imagescale{0.6}
\def\textscale{0.6}
\input{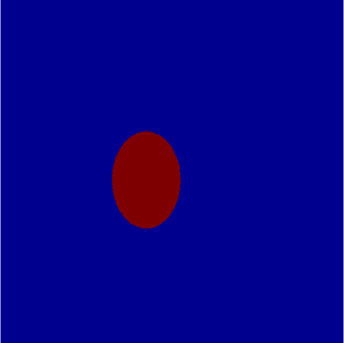}
\input{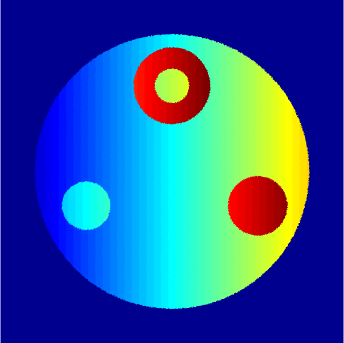}
\input{./Figures/test3_mu_mu}
\caption{Examples of shear modulus phantoms:  $\mu$, $\tilde{\mu}$ and $\mu_3$}
\label{fig:mu}
\end{center}
\end{figure}

\begin{figure}
\begin{center}
\def\imagescale{0.6}
\def\textscale{0.6}
\input{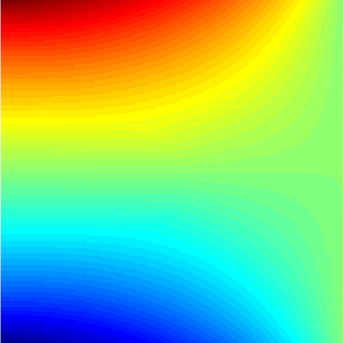}
\input{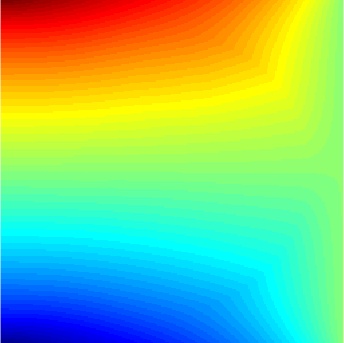}
\input{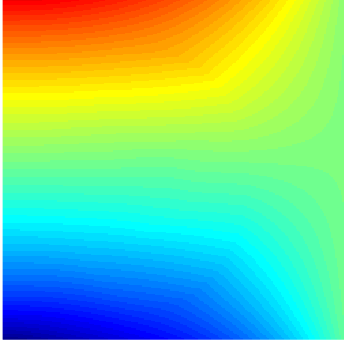} 
\input{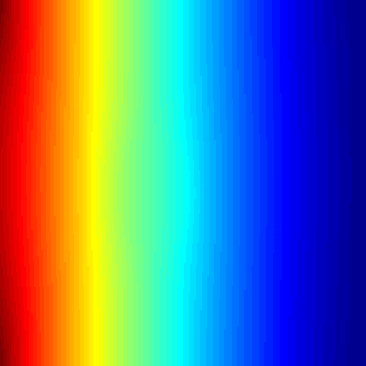}
\input{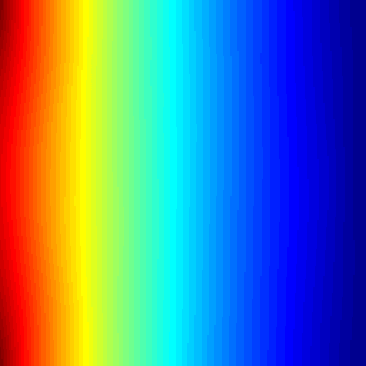}
\input{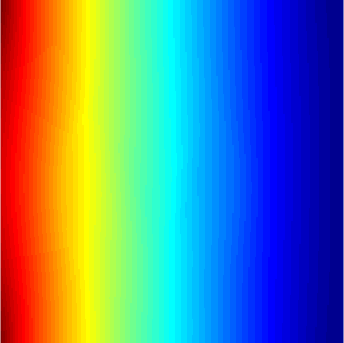} 
\caption{Lines: first and second components of vector fields $u$;  Each column (from left to right) corresponds to the use of $\mu$, $\tilde{\mu}$ and $\mu_3$, respectively.}
\label{fig:u_mu}
\end{center}
\end{figure}

\begin{figure}
\begin{center}
\def\imagescale{0.5}
\def\textscale{\textscaledef}
\def\s{\sdef}
 \input{Figures/test1_mu_mu}\hspace{\s}
 \input{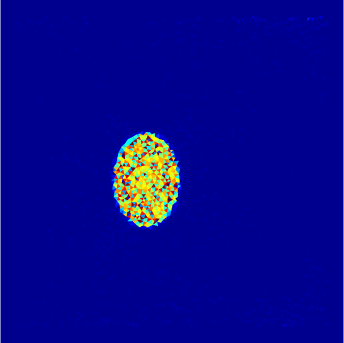}\hspace{\s}
 \input{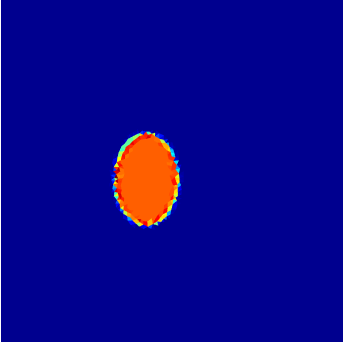}\hspace{\s}
 \input{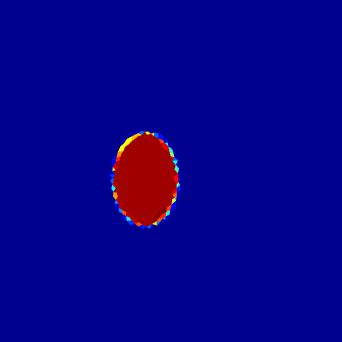} \\
 \input{Figures/test2_mu_mu}\hspace{\s}
 \input{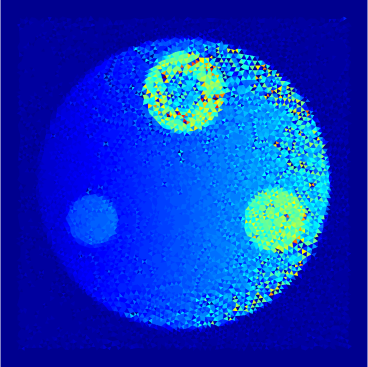}\hspace{\s}
 \input{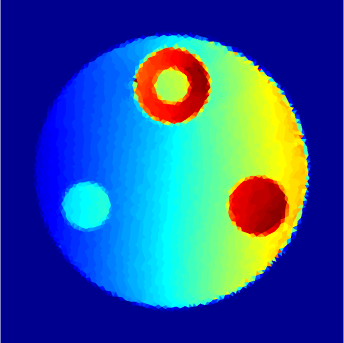}\hspace{\s}
 \input{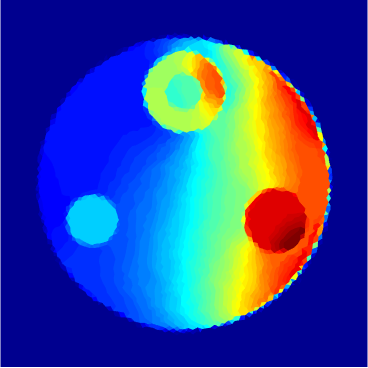} \\
 \input{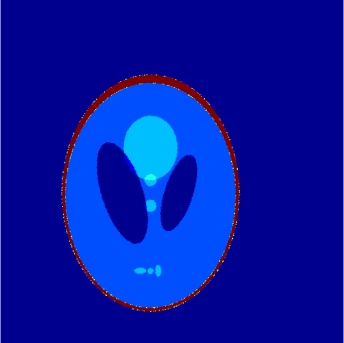}\hspace{\s}
 \input{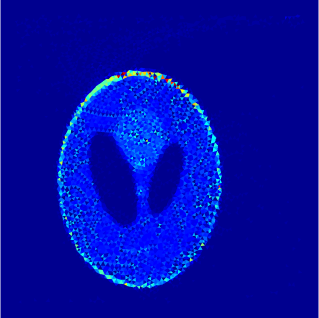}\hspace{\s}
 \input{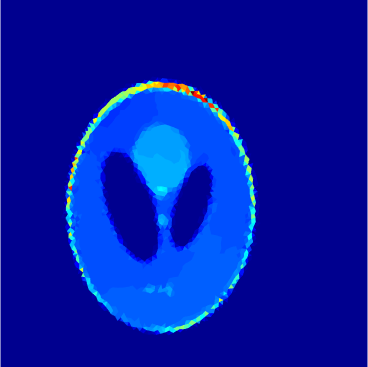}\hspace{\s}
 \input{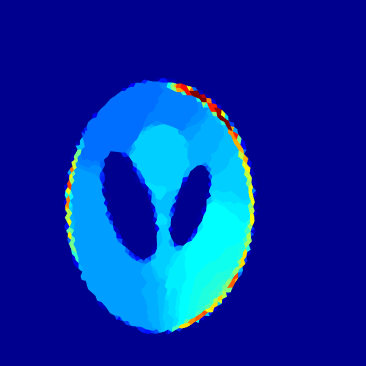} \\
 \caption{Reconstruction of $\mu$: influence of the parameter $\e_{TV}$ ; Lines: shear modulus  $\mu$, $\tilde{\mu}$  and $\mu_3$. 
 Columns: $\e_{TV} = 10^{-6}$, $\e_{TV} = 10^{-4}$ and $\e_{TV} = 10^{-3}$. In each case, we use $\e_{\text{elas}} = 10^{-5}$.}
 \label{fig:influence_e_TV}
 \end{center}
 \end{figure}

\begin{figure}
\begin{center}
\def\imagescale{0.5}
\def\textscale{\textscaledef}
\def\s{\sdef}
 \input{Figures/test1_mu_mu}\hspace{\s}
 \input{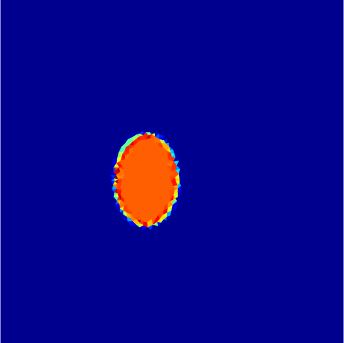}\hspace{\s}
 \input{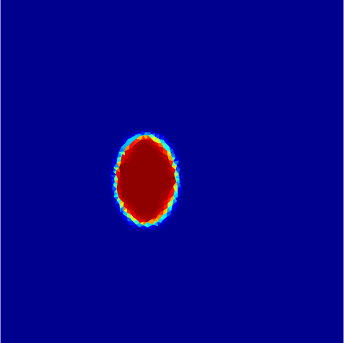}\hspace{\s}
 \input{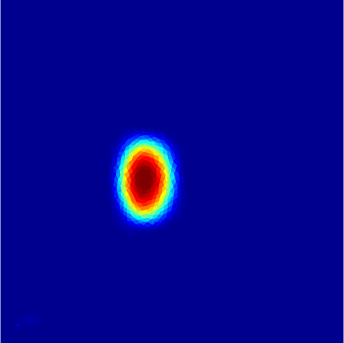} \\
 \input{Figures/test2_mu_mu}\hspace{\s}
 \input{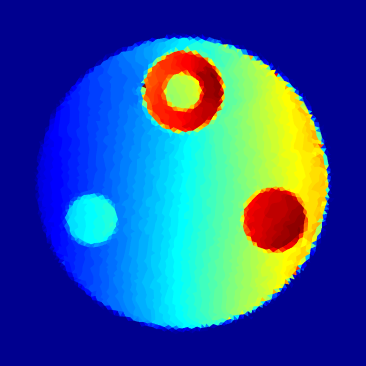}\hspace{\s}
 \input{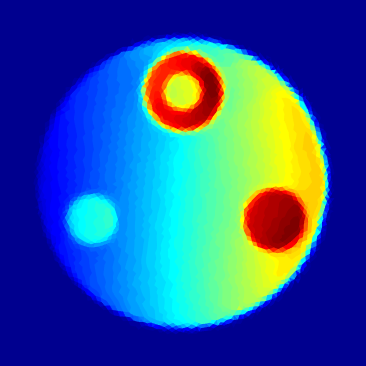}\hspace{\s}
 \input{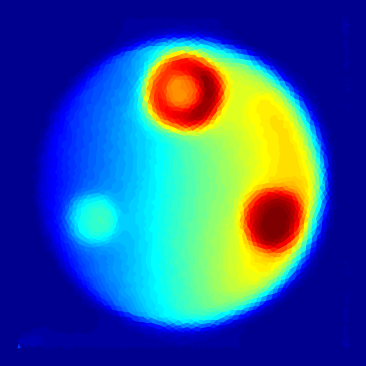} \\
 \input{Figures/test3_mu_mu}\hspace{\s}
 \input{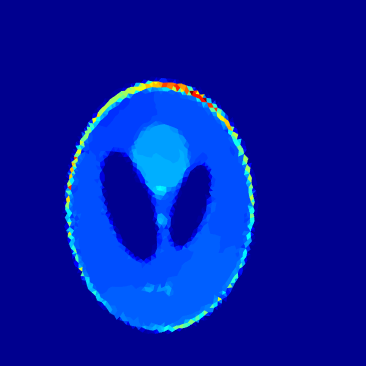}\hspace{\s}
 \input{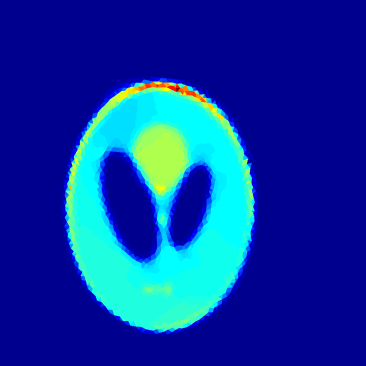}\hspace{\s}
 \input{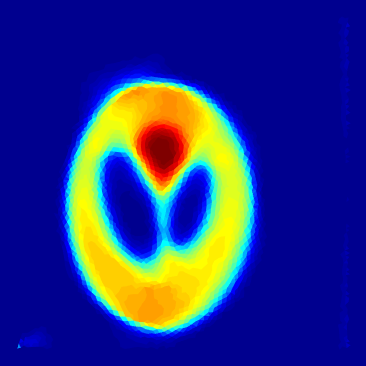} \\
 \caption{Reconstruction of $\mu$: influence of the parameter $\e_{\text{elas}}$; Lines: shear modulus  $\mu$, $\tilde{\mu}$  and $\mu_3$. 
 Columns: $\e_{\text{elas}} = 10^{-5}$, $\e_{\text{elas}} = 10^{-4}$ and $\e_{\text{elas}} = 10^{-3}$. In each case, we use $\e_{TV} = 10^{-4}$.}
 \label{fig:influence_e_elastic}
 \end{center}
 \end{figure}

\subsubsection{Two Lam\'e coefficients inversion}

We now consider the case of isotropic elasticity tensor  
$$\g C =  2 \mu {\g I} + \lambda I \otimes I.$$ 
The numerical reconstructions of Lam\'e coefficients are presented in Figures \ref{fig:mulambda_1} 
and \ref{fig:mulambda_2}, where  two different choices of Lam\'e coefficients are used.
In all experiments, we take the regularization parameters: $\e_{TV} = 10^{-4}$ and $\e_{\text{elas}} = 10^{-4}$.
Moreover, each column corresponds to the numerical reconstruction of  $(\lambda,\mu)$ obtained respectively
with $n=1$, $n=2$ and $n=4$ sets of data $\{\g u^{\ell}\}$. We also plot  the exact Lam\'e coefficient on the first column. 

Notice that in the case of one set of data, we succeeded in reconstructing a first rough approximation of $(\lambda,\mu)$. 
Finally, using  $n=2$ and $n=4$ sets of data leads to a precise reconstruction of $(\lambda,\mu)$ even for 
complex Lam\'e coefficients.

\begin{figure}
\begin{center}
\def\imagescale{0.5}
\def\textscale{\textscaledef}
\def\s{\sdef}
 \input{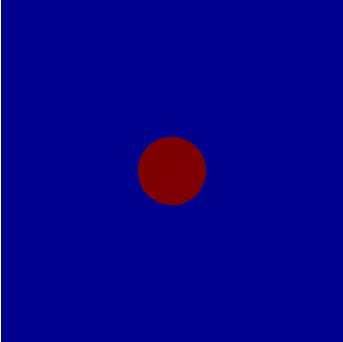}\hspace{\s}
 \input{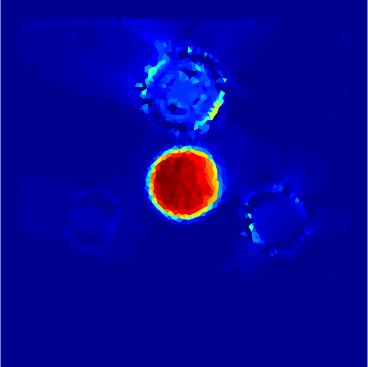}\hspace{\s}
 \input{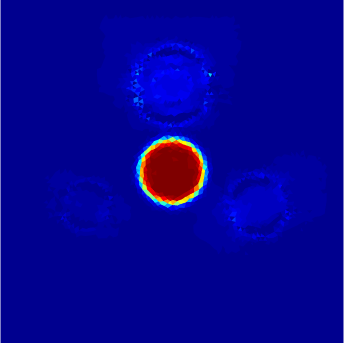}\hspace{\s}
 \input{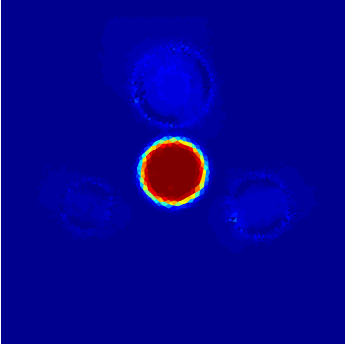} \\
 \input{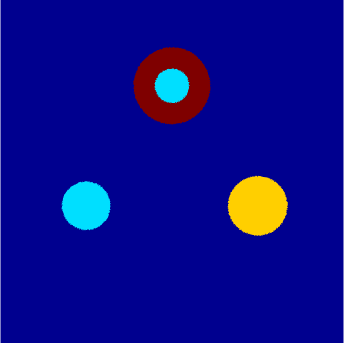}\hspace{\s}
 \input{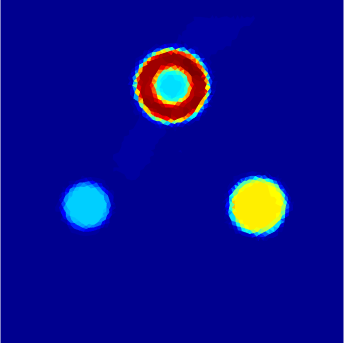}\hspace{\s}
 \input{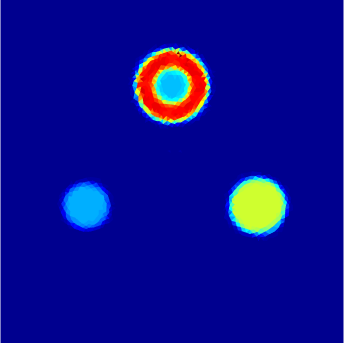}\hspace{\s}
 \input{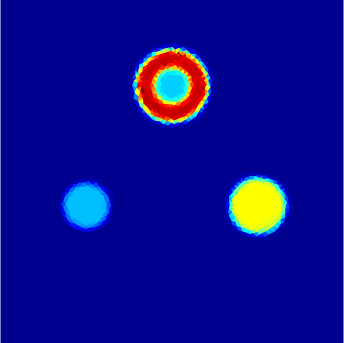} 
\caption{Reconstruction of the Lam\'e coefficients $(\lambda_1,\mu_1)$:   influence of the number of data $\{ {\g u}^{\ell}\}$ ;  
Lines: $\lambda$ and $\mu$; From left to right: Exact Lam\'e coefficients and  their reconstructions obtained respectively with
$1$, $2$ and $4$ sets of data $\{ {\g u}^{\ell}\}$. Here, we used $\e_{TV} = 10^{-4}$ and $\e_{\text{elas}} = 10^{-4}$.}
\label{fig:mulambda_1}
 \end{center}
 \end{figure}

\begin{figure}
\begin{center}
\def\imagescale{0.5}
\def\textscale{\textscaledef}
\def\s{\sdef}
 \input{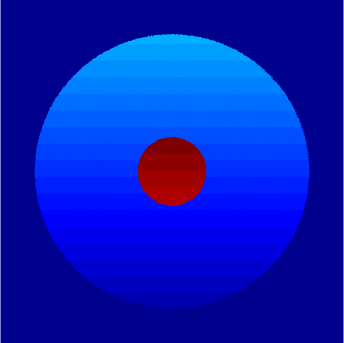}\hspace{\s}
 \input{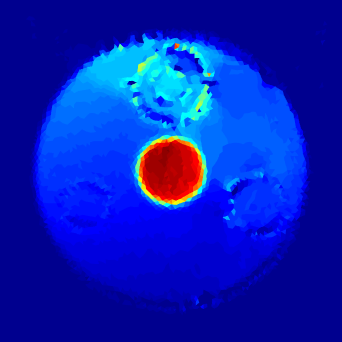}\hspace{\s}
 \input{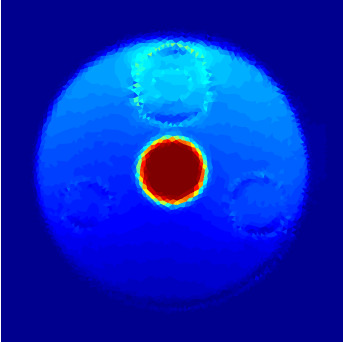}\hspace{\s}
 \input{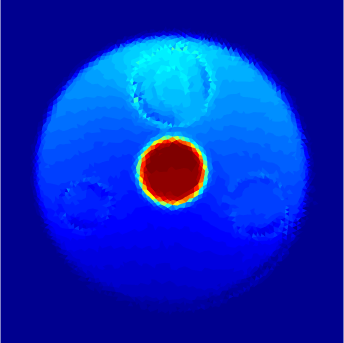} \\
 \input{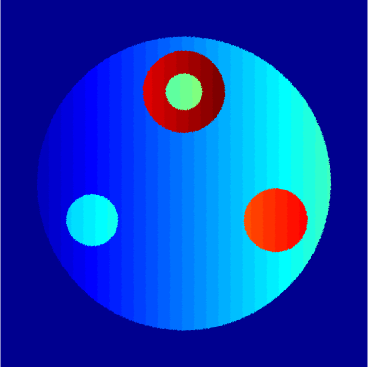}\hspace{\s}
 \input{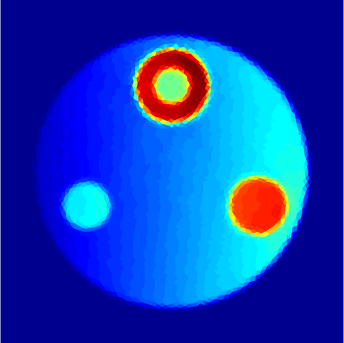}\hspace{\s}
 \input{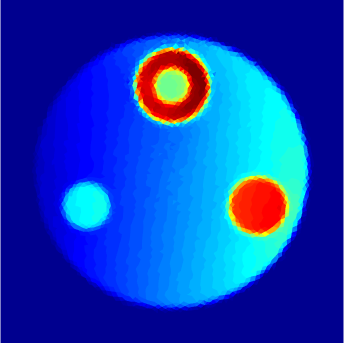}\hspace{\s}
 \input{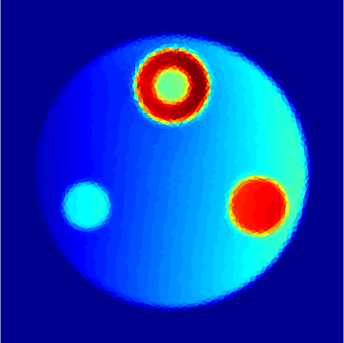} 
\caption{Reconstruction of the Lam\'e coefficients $(\lambda_2,\mu_2)$:   influence of the number of data $\{ {\g u}^{\ell}\}$;  
Lines: $\lambda$ and $\mu$; From left to right: Exact Lam\'e coefficients and  their reconstructions obtained respectively with
$1$, $2$ and $4$ sets of data $\{ {\g u}^{\ell}\}$. Here, we used $\e_{TV} = 10^{-4}$ and $\e_{\text{elas}} = 10^{-4}$.}
\label{fig:mulambda_2}
 \end{center}
 \end{figure}

\subsubsection{Anisotropic tensor inversion}

The last example concerns  the case of an anisotropic shear tensor  
$$ \g C =  \mu^{(1)} {\g C}^{1} + \mu^{(2)} {\g C}^{2} + \mu^{(3)} {\g C}^{3}.$$
The motivation is to show that our methodology can be adapted to any kind of model for the elasticity tensor $C$. 
Like previously,   we use $\e_{TV} = 10^{-4}$, $\e_{\text{elas}} = 10^{-4}$, and the reconstructions obtained 
with different number of data sets are plotted on each column of Figure  \ref{fig:anisotropic_1}. 

Notice that as in the case of an isotropic elastic medium, we successfully  reconstructed a quantitative approximation 
of the  scalar fields $\mu^{(k)}$ even in the case of one set of data.

\begin{figure}
\begin{center}
\def\imagescale{0.49}
\def\textscale{\textscaledef}
\def\s{\sdef}
 \input{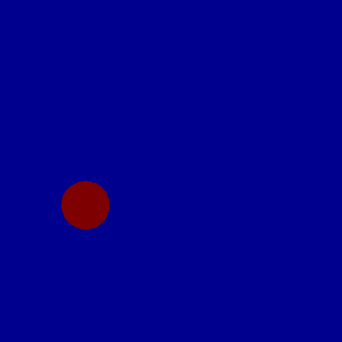}\hspace{\s}
 \input{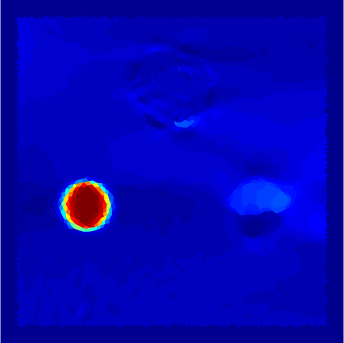}\hspace{\s}
 \input{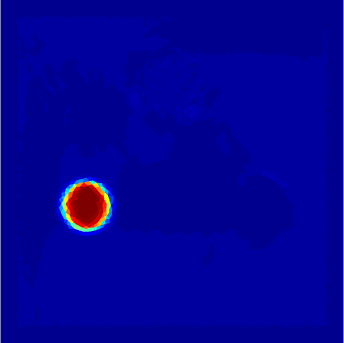}\hspace{\s}
 \input{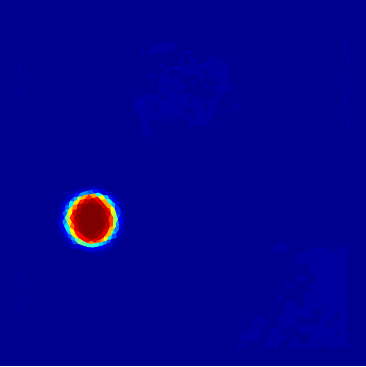} \\
 \input{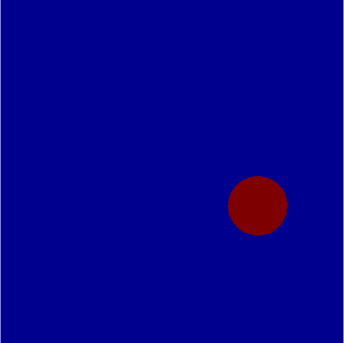}\hspace{\s}
 \input{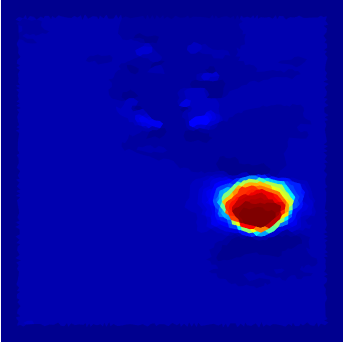}\hspace{\s}
 \input{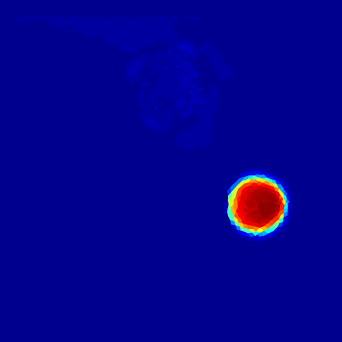}\hspace{\s}
 \input{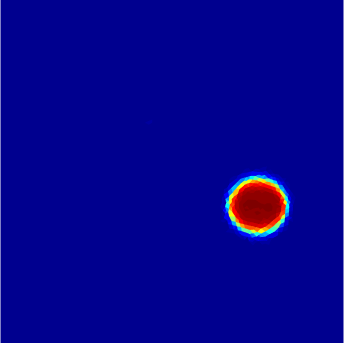} \\
 \input{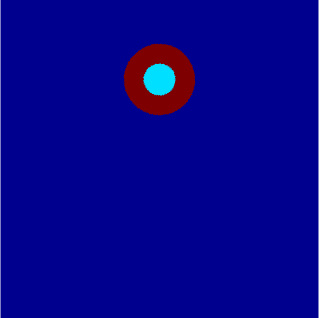}\hspace{\s}
 \input{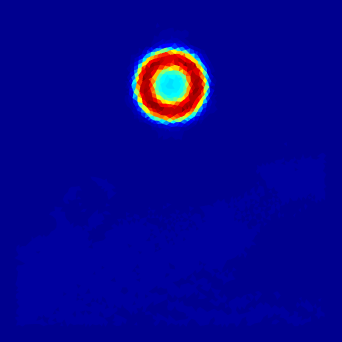}\hspace{\s}
 \input{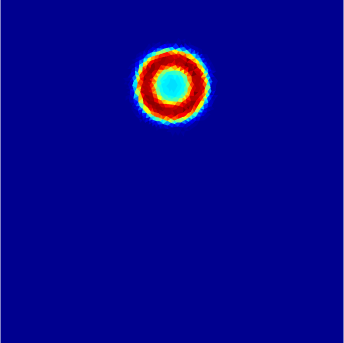}\hspace{\s}
 \input{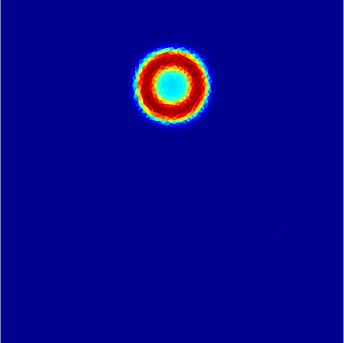} \\
\caption{Reconstruction of the anisotropic  coefficients $(\mu^{(1)},\mu^{(2)},\mu^{(3)})$:  
influence of the number of data $\{ {\g u}^{\ell}\}$ ;  
Lines: $\mu^{(1)}$, $\mu^{(1)}$ and $\mu^{(3)}$ ;
From left to right: Exact coefficients $\mu^{(k)}$ and their reconstructions obtained respectively with
$1$, $2$ and $4$ sets of data $\{ {\g u}^{\ell}\}$. Here, we used $\e_{TV} = 10^{-4}$ and $\e_{\text{elas}} = 10^{-4}$.}
\label{fig:anisotropic_1}
 \end{center}
 \end{figure}

\section{Concluding remarks} 
In this paper, we have introduced a new direct and stable method for reconstructing discontinuous elastic parameters from internal measurements of the displacement fields. We have proved an $L^2$-stability result with only one measurement. We have described a direct discretization of the inverse problem in both the isotropic and anisotropic cases. We have presented a variety of numerical results to illustrate the performance of our approach. In a forthcoming paper, we will apply our approach to real and clinical 
data using only measurements of one component of the displacement field and to  shear wave spectroscopy.   

\appendix

\section{Notations and tools}
\subsection{Tensor notations}

\begin{definition}\label{de:tensornotation} ~
We denote by $\R^{d\times d}$ the space of real matrices and $\R^{d\times d}_\sym$ the space of real symmetric matrices. Notice that $\R^{d\times d}_\sym\sim \R^{d(d+1)/2}$. We denote by $T^4= \R^{d^4}$ the space of order $4$ real tensors. We recall that

\begin{itemize}
\item[(i)] $A:B=\sum_{ij}A_{ij}B_{ij}\in\R$ for $A,B\in \R^{d\times d}$;
\item[(ii)]  $(A\otimes B)_{ijkl}=A_{ij}B_{kl}\in T^4$ for $A,B\in \R^{d\times d}$;
\item[(iii)]  $(\g A:B)_{ij}=\sum_{kl}\g A_{ijkl}B_{kl}\in\R^{d\times d}$ for $\g A\in T^4$ and $B\in \R^{d\times d}$; 
\item[(iv)]  $(B:\g A)_{ij}=\sum_{kl}B_{kl} \g A_{klij}\in\R^{d\times d}$ for $\g A\in T^4$ and $B\in \R^{d\times d}$; 
\item[(v)]  $(\g A:\g B)_{ijkl}=\sum_{mn}\g A_{ijmn}\g B_{mnkl}\in T^4$ for $\g A,\g B\in T^4$;
\item[(vi)]  $\g A|\g B=\sum_{ijkl}\g A_{ijkl}\g B_{ijkl}\in\R$ for $\g A,\g B\in T^4$.
\end{itemize}We define $T^4_\sym$ to be the space of all tensors $\g T$ such that for any symmetric matrix $S\in\R^{d\times d}_\sym$, the matrix $\g T:S$ is also symmetric and for any antisymmetric matrix $A$, we have $\g T:A=0$. Remark that in dimension two, $T^4_\sym\sim \R^6$ and in dimension 3,  $T^4_\sym\sim \R^{21}$.
\end{definition}

\subsection{Sobolev spaces}
 \begin{definition} For any Lipschitz  domain $\Omega\subset\R^d$, we define 
 \begin{align*}
 W^{1,p}(\Omega):= \left\{u \in L^p(\Omega),\ |\nabla u|\in L^2(\Omega) \right\}.
 \end{align*}
  We also define the following space: \begin{align*}
  H^1_0(\Omega,\R^d):=\left\{\g u \in L^2(\Omega,\R^{d}),\ |\nabla \g u|\in L^2(\Omega), \g u|_{\partial \Omega} = \g 0\right\},
 \end{align*}
 equipped with the norm:
 \begin{align*}
\norm{\g u}{H^1_0(\Omega)} := \norm{\nabla^s\g u}{L^2(\Omega)}, 
 \end{align*}
 where
 $\nabla^s\g u=(\nabla \g u+\nabla \g u^T)/2$.
\end{definition}
\begin{remark}The fact that this definition for the norm is correct is a direct consequence of Korn's inequality and Poincar\'e's inequality.
\end{remark}

\begin{proposition}{Properties of $W^{1,p}(\Omega)$:}~\label{lem:w1p} If $p>d$, the following results hold. 
\begin{itemize}
\item[(i)] $W^{1,p}(\Omega) \hookrightarrow L^{\infty}(\Omega)$;
\item[(ii)] If $u,v \in W^{1,p}(\Omega)$, then $uv \in W^{1,p}(\Omega)$;
\item[(iii)] If $u\in W^{1,p}(\Omega)$ and $\varphi \in H^1_0(\Omega)$, then $u\varphi \in H^1_0(\Omega)$;
\item[(iv)] $u\in W^{1,p}(\Omega)$, $f\in H^{-1}(\Omega)$ implies that $uf\in H^{-1}(\Omega)$ and $$\Vert uf\Vert_{H^{-1}}\leq C \Vert u \Vert_{W^{1,p}} \Vert f \Vert_{H^{-1}}$$ for some constant $C$ independent of $u$ and $f$.
\end{itemize}
\end{proposition}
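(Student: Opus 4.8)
The plan is to treat the four assertions in order, with (i) feeding into all the others and the genuine analytic work concentrated in (iii). For (i), I would simply invoke Morrey's embedding theorem: since $p>d$, one has $W^{1,p}(\Omega)\hookrightarrow C^{0,1-d/p}(\overline\Omega)\hookrightarrow L^\infty(\Omega)$ on the Lipschitz domain $\Omega$, so in particular $\norm{u}{L^\infty(\Omega)}\leq C\norm{u}{W^{1,p}(\Omega)}$; this is standard and I would cite it rather than reprove it. For (ii), I would use the weak Leibniz rule: by (i) both $u$ and $v$ lie in $L^\infty(\Omega)$, so $uv\in L^\infty(\Omega)\subset L^p(\Omega)$ on the bounded domain, and the product rule for Sobolev functions with one bounded factor gives $\nabla(uv)=u\nabla v+v\nabla u$ distributionally. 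Since $u,v\in L^\infty$ and $\nabla u,\nabla v\in L^p$, each term belongs to $L^p(\Omega,\R^d)$, whence $uv\in W^{1,p}(\Omega)$.

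For (iii), which is the \emph{heart of the matter}, I would first note $u\varphi\in L^2(\Omega)$ because $u\in L^\infty$ by (i) and $\varphi\in L^2$. The distributional gradient is $\nabla(u\varphi)=u\nabla\varphi+\varphi\nabla u$, and the term $u\nabla\varphi$ sits in $L^2$ since $u$ is bounded and $\nabla\varphi\in L^2$. The delicate term is $\varphi\nabla u$: here I would apply H\"older with exponents $a$ and $p$, where $1/a=1/2-1/p$, to get $\norm{\varphi\nabla u}{L^2}\leq\norm{\varphi}{L^a}\norm{\nabla u}{L^p}$. The key point is that $p>d$ forces $1/a=1/2-1/p>1/2-1/d=1/2^\ast$, so $a<2^\ast$ and the Sobolev embedding $H^1_0(\Omega)\hookrightarrow L^a(\Omega)$ supplies $\norm{\varphi}{L^a}\leq C\norm{\varphi}{H^1_0}$. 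Thus $\nabla(u\varphi)\in L^2$ and one obtains the quantitative bound $\norm{u\varphi}{H^1_0}\leq C\norm{u}{W^{1,p}}\norm{\varphi}{H^1_0}$. The vanishing trace of $u\varphi$ I would establish by approximating $\varphi$ in $H^1_0$ by functions $\varphi_n\in\cD(\Omega)$ and checking that $u\varphi_n\to u\varphi$ in $H^1$ using exactly the same H\"older--Sobolev estimate applied to $(\varphi-\varphi_n)\nabla u$; since each $u\varphi_n$ is compactly supported, the limit lies in $H^1_0(\Omega)$.

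Finally, for (iv) I would argue by duality. For $\varphi\in H^1_0(\Omega)$, part (iii) guarantees $u\varphi\in H^1_0(\Omega)$, so the pairing $\langle uf,\varphi\rangle:=\langle f,u\varphi\rangle_{H^{-1},H^1_0}$ is well defined, and $|\langle uf,\varphi\rangle|\leq\norm{f}{H^{-1}}\norm{u\varphi}{H^1_0}\leq C\norm{f}{H^{-1}}\norm{u}{W^{1,p}}\norm{\varphi}{H^1_0}$ by the estimate just proved; taking the supremum over $\norm{\varphi}{H^1_0}\leq 1$ yields $uf\in H^{-1}(\Omega)$ together with the claimed bound, the constant depending only on $\Omega$, $p$ and $d$. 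The \textbf{main obstacle} is the exponent bookkeeping in (iii): one has to see that $p>d$ is exactly what makes $\varphi\nabla u$ land in $L^2$ (the borderline case being $p=d$), and one must be slightly careful with the trace, which is why I would route it through a density argument rather than manipulate traces of products directly. The extension from scalar $u$ to the matrix-valued coefficient $S^{-1}/\mu_0$ and vector-valued $\g f$ needed in Section \ref{sec:closedrange} is then immediate by applying the scalar statement componentwise.
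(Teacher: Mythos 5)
Your proof is correct, and in fact the paper states this proposition without any proof at all (it is relegated to the appendix as standard material), so there is nothing to compare against; your argument is exactly the expected one. The exponent bookkeeping in (iii) is right -- H\"older with $1/a=1/2-1/p$ combined with $H^1_0(\Omega)\hookrightarrow L^a(\Omega)$, which holds precisely because $p>d$ gives $a<2^\ast$ -- and the duality definition $\langle uf,\varphi\rangle:=\langle f,u\varphi\rangle$ in (iv) is the standard way to make sense of the product and immediately yields the claimed bound.
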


 \begin{lemma}[$\nabla$ has a closed range in $\{\mu_0\}^\perp$]\label{lem_cr} Let $\Omega$ be a Lipschitz domain of $\R^d$ and $\mu_0\in L^\infty(\Omega)$ be such that $\mu_0\geq m\geq 0$. Then, there exists a constant $c>0$ such that

\begin{equation}\nonumber
\forall\mu\in\{\mu_0\}^\perp,\quad \norm{\mu}{L^2(\Omega)}\leq c\norm{\nabla \mu}{H^{-1}(\Omega)}.
\end{equation}
\end{lemma}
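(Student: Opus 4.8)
The plan is to combine a Ne\v{c}as-type inequality with a compactness argument of Peetre--Tartar type, the orthogonality constraint being precisely what rules out the nonzero constant functions. The starting observation is that $\nabla:\{\mu_0\}^\perp\to H^{-1}(\Omega,\R^d)$ is injective, since $\nabla\mu=0$ on the connected set $\Omega$ forces $\mu$ to be constant, and a constant orthogonal to $\mu_0$ must vanish as soon as $\int_\Omega\mu_0\neq 0$; the lemma then amounts to proving that this injective operator is bounded below, i.e. has closed range.

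The one genuinely nontrivial analytic input is the Ne\v{c}as inequality: since $\Omega$ is a bounded Lipschitz domain, there exists $C_0>0$ such that
\begin{equation*}
\norm{\mu}{L^2(\Omega)}\leq C_0\left(\norm{\mu}{H^{-1}(\Omega)}+\norm{\nabla\mu}{H^{-1}(\Omega)}\right),\quad\forall\mu\in L^2(\Omega).
\end{equation*}
I would take this as known, as its proof relies delicately on the Lipschitz structure of $\partial\Omega$ and is classical. I would then upgrade it to the stated estimate on $\{\mu_0\}^\perp$ by contradiction. Suppose the conclusion fails; then there is a sequence $(\mu_n)\subset\{\mu_0\}^\perp$ with $\norm{\mu_n}{L^2(\Omega)}=1$ and $\norm{\nabla\mu_n}{H^{-1}(\Omega)}\to 0$. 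Being bounded in $L^2(\Omega)$, a subsequence converges weakly in $L^2(\Omega)$; and since the inclusion $L^2(\Omega)\hookrightarrow H^{-1}(\Omega)$ is compact (by duality from the Rellich embedding $H^1_0(\Omega)\hookrightarrow\hookrightarrow L^2(\Omega)$), this subsequence is Cauchy in $H^{-1}(\Omega)$. Applying the Ne\v{c}as inequality to the differences $\mu_n-\mu_m$ then shows that $(\mu_n)$ is Cauchy in $L^2(\Omega)$, hence converges strongly to some $\mu$ with $\norm{\mu}{L^2(\Omega)}=1$. Passing to the limit gives $\nabla\mu=0$ in $H^{-1}(\Omega)$, so $\mu$ is constant on $\Omega$; the orthogonality $\int_\Omega\mu\,\mu_0=0$ together with $\int_\Omega\mu_0>0$ (valid since $\mu_0\geq m\geq 0$ and $\mu_0$ is not identically zero) forces $\mu=0$, contradicting $\norm{\mu}{L^2(\Omega)}=1$.

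The main obstacle is the Ne\v{c}as inequality itself: without it one has no way to recover the full $L^2$ norm from a merely negative-norm control of the gradient, and the compactness argument collapses. Everything else—the weak extraction, the compact embedding, and the elimination of the limiting constant through orthogonality—is routine once that inequality is in hand. The only point demanding a little care is verifying $\int_\Omega\mu_0\neq 0$, which I would justify from the sign condition $\mu_0\geq m\geq 0$ and the nontriviality of $\mu_0$, and noting that $\Omega$ is connected so that a distributional gradient equal to zero indeed yields a constant.
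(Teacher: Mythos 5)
Your proof is correct and follows essentially the same route as the paper: a contradiction argument combining the compactness of the embedding $L^2(\Omega)\hookrightarrow H^{-1}(\Omega)$ with the fact that the $L^2$ norm of a function is controlled by the $H^{-1}$ norms of the function and of its gradient, the orthogonality to $\mu_0$ killing the limiting constant. The only difference is presentational: you invoke the Ne\v{c}as inequality explicitly and apply it to differences to obtain strong $L^2$ convergence, whereas the paper concludes from the weak limit being zero, the compact embedding, and the Ne\v{c}as-type relation $\norm{\mu_n}{L^2(\Omega)}^2=\norm{\mu_n}{H^{-1}(\Omega)}^2+\norm{\nabla\mu_n}{H^{-1}(\Omega)}^2$; your version spells out more carefully the same underlying estimate.
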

\begin{proof} Suppose that this is false. Take a sequence $(\mu_n)$ such that $\norm{\mu_n}{L^2(\Omega)}=1$ and $\norm{\nabla \mu_n}{H^{-1}(\Omega)}\to 0$. Up to an extraction $\mu_n\overset{L^2(\Omega)}\rightharpoonup\mu$ and $\int_\Omega\mu_n\mu_0\to \int_\Omega\mu\mu_0 = 0$. Moreover, $\nabla \mu = 0$ and so $\mu$ is constant. Then $\mu=0$. As the embedding $L^2(\Omega)\hookrightarrow H^{-1}(\Omega)$ is compact, we get that $\norm{\mu_n}{H^{-1}(\Omega)}\to 0$. Saying now that $$\norm{\mu_n}{L^2(\Omega)}^2=\norm{\mu_n}{H^{-1}(\Omega)}^2+\norm{\nabla \mu_n}{H^{-1}(\Omega)}^2,$$ we arrive at a contradiction.
\end{proof}

\bibliographystyle{plain}
\bibliography{biblio}

\end{document}